\tikzset{negated/.style={
        decoration={markings,
            mark= at position 0.5 with {
                \node[transform shape] (tempnode) {$\backslash$};
            }
        },
        postaction={decorate}
    }
}
\tikzset{boldnegated/.style={
        decoration={markings,
            mark= at position 0.5 with {
                \node[transform shape] (tempnode) {$\textbf{\textbackslash}$};
            }
        },
        postaction={decorate}
    }
}
\setlist[description]{font=\normalfont}
\declaretheorem{theorem}
\declaretheorem[sibling=theorem]{proposition}
\declaretheorem[sibling=theorem]{claim}
\declaretheorem[sibling=theorem]{corollary}
\declaretheorem[style=definition,sibling=theorem]{definition}
\declaretheorem[style=remark,sibling=theorem]{remark}
\declaretheorem[sibling=theorem]{fact}
\DeclareMathOperator{\dom}{dom}
\DeclareMathOperator{\stem}{stem}
\DeclareMathOperator{\ssucc}{succ}
\DeclareMathOperator{\osucc}{osucc}
\renewcommand{\th}{^\text{th}}
\newcommand{\su}{\hspace{1mm}\text{s.t.}\hspace{1mm}}
\newcommand{\seq}[2]{\langle #1 : #2 \rangle}
\newcommand{\pair}[2]{\langle #1 , #2 \rangle}
\newcommand{\Col}{\textup{Col}}
\newcommand{\ON}{\textup{ON}}
\newcommand{\CH}{\textup{\textsf{CH}}}
\newcommand{\LIP}{\textup{\textsf{LIP}}}
\newcommand{\MM}{\textup{\textsf{MM}}}
\newcommand{\BFP}{\textup{\textsf{BFP}}}
\newcommand{\then}{\implies}
\newcommand{\rest}{\upharpoonright}
\newcommand{\Q}{\mathbb Q}
\renewcommand{\P}{\mathbb P}
\newcommand{\pI}{\textup{\textsf{I}}}
\newcommand{\pII}{\textup{\textsf{II}}}
\author{Maxwell Levine}
\title[Classical Namba forcing can have the W.C.A.P.]{Classical Namba forcing can have the weak countable approximation property}
\begin{document}

\maketitle

\begin{abstract} We show that it is consistent from an inaccessible cardinal that classical Namba forcing has the weak $\omega_1$-approximation property. In fact, this is the case if $\aleph_1$-preserving forcings do not add cofinal branches to $\aleph_1$-sized trees. The exact statement we obtain is similar to Hamkins' Key Lemma. It follows as a corollary that $\MM$ implies that there are stationarily many indestructibly weakly $\omega_1$-guessing models that are not internally unbounded. This answers a question of Cox and Krueger and partially answers another. Our result on $\MM$ gives a short proof of a weakening of Cox and Krueger's main result by removing their use of higher Namba forcings, but we find another application of their ideas by answering a question of Adolf, Apter, and Koepke on preservation of successive cardinals by singularizing forcings.\end{abstract}

\section{Background}

Research in infinitary combinatorics has shown that the specific cardinals $\aleph_0$, $\aleph_1$, $\aleph_2$, etc$.$ exhibit distinct properties. One way to look at this is to examine to what extent these cardinals can be turned into each other by forcing. Bukovsk{\'y} and Namba independently showed that $\aleph_2$ can be turned into an ordinal of cofinality $\aleph_0$ without collapsing $\aleph_1$, and this forcing and its variants for other cardinals are now known as Namba forcing \cite{Namba1971}. This paper is about the functions added by variants of Namba forcing.\footnote{The reader is assumed to have familiarity with the basics of forcing theory \cite{Jech2003}.}

The conditions in \emph{classical Namba forcing},\footnote{There is some disorder regarding the referent in the literature because Jech refers to $\P_\mathsf{CNF}$ as Namba forcing \cite[Chapter 15]{Jech2003} and Shelah denotes is $\mathsf{Nm}$ \cite{PIF}, although the version here is due to Bukovsky \cite{Bukovsky1976}. Namba's version \cite{Namba1971}, denoted by Shelah as $\mathsf{Nm'}$, has a different splitting style. For arguments along the lines of \autoref{maintheorem} for the other splitting style, see joint work with Mildenberger \cite[Section 2.1]{Levine-Mildenber2024}.} which we denote $\P_\textup{\textsf{CNF}}$, are perfect trees of height $\omega$ and width $\aleph_2$. Specifically, $p \in \P_\textup{\textsf{CNF}}$ if and only if: $p \subseteq {}^{<\omega} \aleph_2$; $t \in p$ and $s \sqsubseteq t$ implies $s \in p$; for all $t \in p$, $|\{\alpha < \aleph_2:t {}^\frown \langle \alpha \rangle \in p\}| \in \{1,\aleph_2\}$; and for all $t \in p$ there is some $s \sqsupseteq t$ such that $\{\alpha < \aleph_2:s {}^\frown \langle \alpha \rangle \in p\}$ is unbounded in $\aleph_2$. We have $p \le_{\P_\mathsf{CNF}} q$ if and only if $p \subseteq q$. The particularities of the width $\aleph_2$ will be significant for this paper.


Following work of Viale, Weiss, and others, Cox and Krueger introduced the weak guessing model property to explore questions around guessing models. Their approach required an analysis of higher Namba forcings---meaning variations of Namba forcing that add sequences to cardinals above $\aleph_2$---and a demonstration that they have the weak countably approximation property \cite{Cox-Krueger2018}. As they point out, $\CH$ implies that classical Namba forcing cannot have the weak $\omega_1$-approximation property. (A forcing $\P$ has the \emph{weak $\omega_1$-approximation property} if it does not add new functions with domain $\omega_1$ whose initial segments are in the ground model.) Since it collapses $\omega_2^V$ to $\omega_1$, there is a new subset of $\omega_1$, and since $\CH$ implies that Namba forcing does not add reals \cite[Chapter 28]{Jech2003}, it follows that initial segments of this new subset are in the ground model. They asked whether classical Namba forcing could have the weak $\omega_1$-approximation property \cite[Question 5]{Cox-Krueger2018}, and we provide an affirmative answer here.

For the purpose of stating our main theorem, we will say that the \emph{Baumgartner Freezing Property} holds if for all $\omega_1$-preserving forcings $\P$ and all $\omega_1$-sized trees $T$, $\P$ does not add a cofinal branch to $T$. We denote this $\BFP$. (The attribution will be clarified later.) It consistently holds in the presence of certain specializing functions, but we will refer to its abstract form \cite{Baumgartner1983}.

An $\omega_1$-sized tree $T$ is \emph{$B$-special} if there is a function $f:T \to \omega$ such that for all $x,y,z \in T$, if $x \le y,z$ and $f(x)=f(y)=f(z)$, then $y$ and $z$ are compatible. In this case $f$ is called a \emph{$B$-specializing function}.

\begin{proposition}\label{freezing} If all $\omega_1$-sized trees have a $B$-specializing function, then $\BFP$ holds.\end{proposition}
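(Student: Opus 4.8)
The plan is to prove the following somewhat stronger statement, from which $\BFP$ is immediate given the hypothesis of the proposition: if $\P$ is an $\omega_1$-preserving forcing and $T$ is a tree of height $\omega_1$ carrying a $B$-specializing function $f\colon T\to\omega$, then every cofinal branch of $T$ lying in a $\P$-generic extension already belongs to the ground model. (Each $\omega_1$-sized tree of height $\omega_1$ is $B$-special by hypothesis, and these are the trees to which $\BFP$ refers; a branch of a tree of smaller height plays no role here.)

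So I would fix $\P$ $\omega_1$-preserving, $T$ of height $\omega_1$ with $B$-specializing $f$, a generic $G$, and a cofinal branch $b\in V[G]$; the goal is to show $b\in V$. First, since $\P$ preserves $\omega_1$, the branch $b$ has order type $\omega_1$ and $\omega_1$ is still regular, so a pigeonhole argument on $f\rest b$ yields an $n<\omega$ with $b\cap f^{-1}(n)$ uncountable, hence cofinal in $b$. Next I would let $x_0$ be the $\le_T$-least node of $b\cap f^{-1}(n)$ and put $C=\{y\in T: x_0\le_T y \text{ and } f(y)=n\}$. The defining clause of a $B$-specializing function, applied with $x_0$ in the role of the bottom node, says exactly that any two members of $C$ are compatible, hence comparable in the tree; so $C$, and therefore its downward closure $\overline{C}=\{t\in T: t\le_T y \text{ for some } y\in C\}$, is a chain. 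Since $b\cap f^{-1}(n)\subseteq C\subseteq\overline{C}$ and $b\cap f^{-1}(n)$ is cofinal in $b$, passing to downward closures gives $b\subseteq\overline{C}$; and because $\overline{C}$ is a chain while $b$ already meets every level of $T$, in fact $\overline{C}=b$.

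The crux of the argument — and the one point that needs care — is that although $b$, $n$, and $x_0$ were all produced inside $V[G]$, the objects $n$ and $x_0$ are respectively a natural number and a single node of the ground-model tree $T$, so each lies in $V$; hence $C$, and with it $\overline{C}$, is definable in $V$ from the ground-model parameters $T$, $\le_T$, $f$, $x_0$, $n$, so $\overline{C}\in V$, and therefore $b=\overline{C}\in V$. I would round this out by noting the two routine absoluteness facts it uses: the $B$-specializing property of $f$ is a bounded assertion about $(T,\le_T,f)\in V$ and so continues to hold in $V[G]$ (which is why $C$ is still a chain there), and the computations ``the downward closure of a chain is a chain'' and ``a chain through $T$ that contains the branch $b$ equals $b$'' are carried out identically in $V$ and in $V[G]$. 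I do not expect a genuine obstacle here; the only delicate point is the bookkeeping about which objects truly live in $V$ — the node $x_0$, the integer $n$, and the sets $C$ and $\overline{C}$ built from them — as opposed to merely in $V[G]$, namely the branch $b$ itself together with the fact that $x_0$ and $n$ have the stated properties.
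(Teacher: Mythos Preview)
The paper states this proposition without proof, treating it as well known (it is essentially the content of Baumgartner's original observation cited as \cite[Section~8]{Baumgartner1983}). Your argument is correct and is the standard one: pigeonhole on the $B$-specializing function along the new branch to isolate a colour $n$ and a least node $x_0$, observe that the ground-model set $C=\{y\ge_T x_0: f(y)=n\}$ is a chain by the defining property of $f$, and conclude that the branch equals the downward closure of $C$, which lies in $V$. The bookkeeping you flag---that $n$ and $x_0$ are ground-model objects even though they were chosen in $V[G]$, and that ``$C$ is a chain'' and ``the downward closure of a chain is a chain'' are absolute---is exactly the point, and your handling of it is fine. Your parenthetical that only height-$\omega_1$ trees matter is also correct: a ``cofinal branch'' in this context means one of length $\omega_1$, and indeed this is how $\BFP$ is invoked later in the paper.
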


Baumgartner obtained the consistency of $\BFP$ using a model in which there are no Kurepa trees.

\begin{fact}\cite[Section 8]{Baumgartner1983}\label{Baumgartner-theorem} $\BFP$ is consistent from an inaccessible cardinal.\end{fact}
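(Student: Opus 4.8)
The plan is to force a model in which every $\omega_1$-sized tree carries a $B$-specializing function; by \autoref{freezing} such a model satisfies $\BFP$. Thus the entire problem reduces to a single iterated forcing construction, and the role of \autoref{freezing} is precisely to convert the combinatorial statement ``every $\omega_1$-sized tree is $B$-special'' into the dynamical statement $\BFP$. Throughout I may restrict attention to trees of height $\omega_1$, since a cofinal branch is only at issue there.

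For a fixed $\omega_1$-sized tree $T$, I would add a $B$-specializing function by the natural poset $S_T$ whose conditions are finite partial functions $p \colon T \to \omega$ satisfying the $B$-requirement on $\dom(p)$ (whenever $x \le y,z$ lie in $\dom(p)$ and $p(x)=p(y)=p(z)$, the nodes $y,z$ are comparable), ordered by reverse inclusion. A density argument shows the generic union is a total $B$-specializing function, and since the requirement is upward absolute, once $T$ is $B$-special it stays $B$-special in every further extension. The first real task is to show $S_T$ is proper, hence $\omega_1$-preserving, for an \emph{arbitrary} $\omega_1$-sized $T$ and not merely for special Aronszajn trees where the analogous poset is ccc. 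I would argue properness through countable $M \prec H_\theta$ with $T, S_T \in M$: given $p \in S_T \cap M$ one builds an $(M,S_T)$-generic extension by amalgamating, for each dense $D \in M$, the finitely many nodes of the working condition lying outside $M$ with a witness $q \in D \cap M$ chosen so that on the nodes below those external nodes $q$ avoids the finitely many colours $p$ assigns to them. \emph{This amalgamation is the main obstacle}: one must check that such avoidance-witnesses are dense, i.e.\ that blocking finitely many colours (out of the $\omega$ available) never destroys the density reflected into $M$, and it is exactly here that the $B$-condition must be shown to leave enough freedom.

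Next I would iterate. Starting from an inaccessible $\kappa$, I would run a countable-support iteration $\langle \P_\alpha, \dot\Q_\alpha : \alpha < \kappa\rangle$ of length $\kappa$, at each stage forcing with a name for some $S_T$ and interleaving collapses $\Col(\omega_1,|\alpha|)$, using a bookkeeping function to guarantee that every $\omega_1$-sized tree appearing at any stage is eventually specialized. The collapses are countably closed and $S_T$ is proper, so the iteration is proper and $\omega_1$ is preserved throughout. Since each iterand has size $<\kappa$ and $\kappa$ is inaccessible, the iteration is $\kappa$-c.c., so $\kappa$ is preserved, the interleaved collapses make $\kappa=\omega_2$ in the extension, and every $\omega_1$-sized tree of $V^{\P_\kappa}$ has a nice name of size $<\kappa$, hence already lies in some $V^{\P_\alpha}$. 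Combining the $\kappa$-c.c.\ with the bookkeeping, each such $T$ is specialized at some stage $\alpha$, and by upward absoluteness remains $B$-special in $V^{\P_\kappa}$; \autoref{freezing} then delivers $\BFP$.

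Finally, the inaccessibility is not an artifact of the method. A $B$-specializing function $f$ on an $\omega_1$-tree $T$ injects the cofinal branches of $T$ into $T$ itself: send each branch $b$ to the first node $w$ on $b$ whose $f$-colour recurs cofinally along $b$; if two branches split above a common such $w$ of colour $c$, the $B$-condition is violated by a pair of incomparable $c$-coloured nodes above $w$, so the assignment is injective. Hence a $B$-special tree has at most $\omega_1$ cofinal branches, and the model produced above contains no Kurepa tree. Since a model with no Kurepa trees has the consistency strength of an inaccessible, the hypothesis of the Fact is optimal. I would organize the write-up so that the properness of $S_T$ and this branch-counting observation are isolated as lemmas, after which the iteration and bookkeeping are routine.
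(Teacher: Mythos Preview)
The paper does not supply its own proof of this fact; it is stated as a citation to Baumgartner, with only the accompanying remark that his model has no Kurepa trees. Your outline---a countable-support proper iteration of the $B$-specializing posets $S_T$ (with interleaved collapses) up to an inaccessible $\kappa$, followed by an appeal to \autoref{freezing}---is precisely Baumgartner's argument from the cited source, and your closing observation that a $B$-special $\omega_1$-tree carries at most $\aleph_1$ cofinal branches recovers both the paper's remark about Kurepa trees and the optimality of the inaccessible hypothesis.
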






\begin{theorem}\label{maintheorem} Suppose that every $\omega_1$-sized tree cannot gain a cofinal branch from an $\omega_1$-preserving forcing. Then it follows that if $\P=\P_\textup{\textsf{CNF}}$ is the classical Namba forcing and $\dot{\Q}$ is a $\P$-name for a countably closed forcing, then $\P \ast \dot{\Q}$ has the weak $\omega_1$-approximation property.\end{theorem}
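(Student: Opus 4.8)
\emph{Proof sketch (plan of proof).} The plan is to argue by contradiction, organized around the ``tree of attempts'' of a putative bad function; the real work is to keep that tree of size $\aleph_1$ so that the hypothesis applies to it.

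First, the easy ingredient. The forcing $\P=\P_{\textsf{CNF}}$ preserves $\aleph_1$ by the classical Bukovsk\'y--Namba theorem, and $\dot\Q$ is forced to be countably closed and hence $\aleph_1$-preserving over $V^\P$; so $\P\ast\dot\Q$ is an $\omega_1$-preserving forcing, and the hypothesis will apply to it directly. Now suppose that $(p_0,\dot q_0)\in\P\ast\dot\Q$ forces that $\dot f$ is a new function with domain $\check\omega_1$ all of whose proper initial segments lie in $\check V$. Using a fixed ground-model bijection of a suitable $V_\alpha$ with its cardinality, one may recode $\dot f$ and shrink $(p_0,\dot q_0)$ so as to assume that it forces $\dot f\colon\check\omega_1\to\check\lambda$ for a fixed ordinal $\lambda$. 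Working in $V$, set
\[
T=\{\, s\in{}^{<\omega_1}\lambda \ :\ \text{some }(p,\dot q)\leq(p_0,\dot q_0)\text{ forces } \dot f\restriction\dom(s)=\check s\,\},
\]
ordered by end-extension. Since $(p_0,\dot q_0)$ forces every $\dot f\restriction\alpha$ into $\check V$, below each condition every countable level of $T$ contains an initial segment of the forced value of $\dot f$; in particular $T$ has height $\omega_1$, and in $V[G][H]$ the sequence $\langle\dot f^{G\ast H}\restriction\alpha : \alpha<\omega_1\rangle$ is a cofinal branch of $T$ which is not in $V$, because its union is $\dot f^{G\ast H}\notin V$. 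Thus $\P\ast\dot\Q$ adds a new cofinal branch to the height-$\omega_1$ tree $T$.

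The main obstacle --- and the point at which the proof resembles Hamkins' Key Lemma --- is that $T$ need not be $\aleph_1$-sized: its levels are a priori bounded only by the number of conditions, and both $\P$ and $\dot\Q$ are far larger than $\aleph_1$. What must be shown is that, below some refinement of $(p_0,\dot q_0)$, the relevant part of $T$ cuts down to an $\aleph_1$-sized subtree $S\in V$ that still carries a new cofinal branch. This is where the particular shape of \emph{classical} Namba forcing is used, and two features drive it. First, $\dot\Q$ is countably closed over $V^\P$, so it adds no new countable sequences there; hence pinning down the countable object $\dot f\restriction\alpha$ essentially reduces to the $\P$-side. Second, and crucially, every splitting node of a condition in $\P_{\textsf{CNF}}$ has exactly $\aleph_2$ immediate successors: running a fusion argument against a structure of size $\aleph_1$ --- for instance an elementary submodel $M\prec H_\theta$ with $\omega_1\subseteq M$, ${}^{\omega}M\cap V\subseteq M$, and $\{\P,\dot\Q,\dot f,(p_0,\dot q_0),\lambda\}\in M$ --- one arranges that the countable approximations $\dot f\restriction\alpha$ are controlled by an $\aleph_1$-sized family of possibilities, a Key-Lemma-style amalgamation in which the width being exactly $\aleph_2$, rather than something larger, is precisely what allows an $\aleph_1$-sized family to do the bookkeeping. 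Granting this, one obtains the desired $\aleph_1$-sized $S\in V$ with a new cofinal branch added by $\P\ast\dot\Q$.

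Finally, $S\in V$ is $\omega_1$-sized, $\P\ast\dot\Q$ is $\omega_1$-preserving, and it adds a new cofinal branch to $S$, contradicting the hypothesis. I expect essentially all of the difficulty to lie in the fusion/amalgamation step of the previous paragraph: this is the technical core, the analogue of Hamkins' Key Lemma, and presumably the reason the statement is restricted to $\P_{\textsf{CNF}}$ rather than to the wider Namba forcings considered by Cox and Krueger; everything else is routine bookkeeping.
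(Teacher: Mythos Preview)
Your plan has the dichotomy backwards. You propose to cut the tree $T$ of possible initial segments down to an $\aleph_1$-sized subtree $S$ below some condition, and then invoke the hypothesis. But this is exactly what cannot be done: the paper's first step (the dichotomy claim) observes that if \emph{any} condition $(p,\dot c)\le(p_0,\dot q_0)$ forced $\{\dot f\restriction i:i<\omega_1\}\subseteq X$ for some $X\in V$ with $|X|\le\aleph_1$, then $X$ (ordered by end-extension) would already be an $\aleph_1$-sized tree gaining a new cofinal branch, so $\BFP$ would fail. Contrapositively, assuming $\BFP$, no condition forces the initial segments into any fixed $\aleph_1$-sized set --- so the fusion you envisage, which is supposed to trap the approximations inside an $\aleph_1$-sized family, is provably impossible under your own hypothesis.

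The paper uses this impossibility \emph{positively}. From ``no $\aleph_1$-sized set captures the initial segments'' one deduces a splitting lemma: below any condition one can refine so that the $\aleph_2$-many immediate successors of the stem force pairwise \emph{distinct} values of $\dot f\restriction i$ for some $i$ (here the width $\aleph_2$ matters because an $\aleph_1$-sized set of already-used values cannot absorb all successors). A game argument then finds a single $k<\omega_1$ at which this splitting can be iterated along every branch, and a fusion produces a condition $(q,\dot d)$ below which any decision of $\dot f\restriction k$ recovers the Namba generic branch in $V$. Hence $(q,\dot d)\Vdash\dot f\restriction k\notin V$, contradicting the assumption that all initial segments lie in $V$. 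So the contradiction is not with $\BFP$ at the end; $\BFP$ is spent at the very beginning to force the tree to be \emph{large}, and the real work shows that this largeness makes a single initial segment encode the generic.
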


Note that $\dot{\Q}$ can name the trivial forcing. The reader will also be able to observe from the proof \autoref{maintheorem} that it is sufficient for $\dot{\Q}$ to be countably strategically closed. \autoref{maintheorem} is optimal in a sense because Krueger showed that any forcing that has the countable approximation property also has the countable covering property \cite{Krueger2019}. Classical Namba forcing cannot have the $\omega_1$-approximation property.

\autoref{maintheorem} also can be seen as a variation on Hamkins' Key Lemma \cite{Hamkins2001}, which was used to show how large cardinal properties are preserved in certain forcing extensions. Many variations have appeared since, notably by Usuba \cite{Usuba2014}. These ideas were used by Viale and Weiss, who introduced guessing models to derive information about the consistency strength of the proper forcing axiom $(\textup{\textsf{PFA}})$ \cite{Viale-Weiss2010}. Guessing models have since become a major topic of research.

Let us introduce some of the basic terminology of guessing models that we will use here. Suppose $M \in P_{\omega_2}(X)$ with $\omega_1 \subseteq M$. We say that $M$ is \emph{weakly $\omega_1$-guessing} if for all $f:\omega_1 \to \ON$ such that $f \rest i \in M$ for all $i<\omega_1$, it follows that $f \in M$. We say that $M$ is \emph{indestructibly weakly $\omega_1$-guessing} if this property holds in all $\omega_1$-preserving extensions. A structure $M$ of cardinality $\aleph_1$ with $\omega_1 \subseteq M$ is internally unbounded if for all $x \in P_{\omega_1}(M)$, there is some $y \in M \cap P_{\omega_1}(M)$ such that $x \subseteq y$. Internal unboundedness and its variations were studied extensively by Krueger and are important for the properties of guessing models.


The findings here may be useful for Viale and Weiss' proposal to work on guessing models for fragments of Martin's Maximum ($\textup{\textsf{MM}}$). Some indication for this possibility comes from an application to another question of Cox and Krueger, who ask whether $\MM$ suffices to prove their main theorem \cite[Question 1]{Cox-Krueger2018}. We are able to obtain $\omega_1$-guessing:

\begin{corollary}\label{maincorollary} $\textup{\textsf{MM}}$ implies that for all $\theta \ge \omega_2$, there is a stationary set of $M \in P_{\omega_2}(H(\theta))$ such that $\omega_1 \subset M$, and $M$ is indestructibly weakly $\omega_1$-guessing and not internally unbounded.\end{corollary}

This in particular uses a weaker large cardinal hypothesis than Cox and Krueger, who obtain their statement from a supercompact and countably many measurables above it. The higher Namba forcings that they use are not needed for obtaining \autoref{maincorollary}. However, we will demonstrate an alternative application for their work:

\begin{theorem}\label{secondtheorem} Assume the consistency of class-many supercompact cardinals. Then there is a forcing extension in which, for all double successor cardinals $\lambda^{++}$, there is a further set forcing extension adding a cofinal $\omega$-sequence to $\lambda^{++}$ without collapsing $\mu$ for $\mu \le \lambda^+$.\end{theorem}


Hence we can have a sense of to what extent these higher versions of Namba forcing generalize the behavior of classical Namba forcing. This answers the first question of a paper by Adolf, Apter, and Koepke \cite{Adolf-Apter-Koepke2018}, who also indicate that a substantial cardinal hypothesis is necessary in order to obtain the statement of \autoref{secondtheorem}.







\section{Classical Namba Forcing and Weak Approximation}



We underscore an important fact that does not depend on $\CH$. The proof of this fact will to some extent be imitated in the proof of \autoref{maintheorem}:

\begin{fact}\label{namba-pres} $\P_\textup{\textsf{CNF}}$ preserves $\omega_1$.\end{fact}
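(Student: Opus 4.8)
The plan is to show that for every $p\in\P_\textup{\textsf{CNF}}$ and every $\P_\textup{\textsf{CNF}}$-name $\dot f$ with $p\Vdash\dot f\colon\omega\to\check\omega_1$, there are $q\le p$ and $\gamma<\omega_1$ with $q\Vdash\range(\dot f)\subseteq\check\gamma$. This suffices: since such $q$ are then dense below any condition, every function $\omega\to\omega_1^V$ in any extension has bounded range, so $\omega_1^V$ has no cofinal $\omega$-sequence there and hence remains a cardinal.

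The engine is a one-step lemma: given $p$ and a $\P_\textup{\textsf{CNF}}$-name $\dot\alpha$ for an ordinal below $\omega_1$, there is $q\le p$ with $\stem(q)=\stem(p)$ deciding $\dot\alpha$. First I would run a rank argument. Call an $\aleph_2$-splitting node $t\in p$ \emph{good} if some $r\le p\rest t$ with $\stem(r)=t$ decides $\dot\alpha$; assign ranks by letting good nodes have rank $0$, declaring a non-good splitting node to have rank $\le\xi$ exactly when $\aleph_2$-many of its successor directions lead to splitting nodes of rank $<\xi$, and leaving the rank $\infty$ if no ordinal works. If $\stem(p)$ had rank $\infty$, then — using that a splitting node of rank $\infty$ has $\aleph_2$-many successor directions leading to rank-$\infty$ splitting nodes — one builds a subtree $p^{*}\le p$ all of whose splitting nodes are non-good; but $p^{*}$ has a refinement deciding $\dot\alpha$, whose stem is then a good splitting node of $p^{*}$, a contradiction. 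Hence $\stem(p)$ has ordinal rank, and an induction on that rank produces $p'\le p$ with $\stem(p')=\stem(p)$ carrying a front $F$ of nodes each deciding $\dot\alpha$. Second I would thin $p'$ by recursion along the well-founded tree of nodes lying weakly below $F$: working downward, at each $\aleph_2$-splitting node I sort the successor directions by the value recursively attached below them and keep a single class; since there are $\aleph_2$ directions but only $\omega_1$-many possible values and $\aleph_1<\cf(\aleph_2)=\aleph_2$, some class has size $\aleph_2$. A single value then propagates all the way down to the stem, giving $q\le p'$ with $\stem(q)=\stem(p)$ deciding $\dot\alpha$. It is exactly this last step that uses the width $\aleph_2$.

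To conclude I would combine the engine over all $n<\omega$. The naive iteration — keeping the stem fixed and deciding $\dot f(0),\dot f(1),\dots$ in turn — has the flaw that each application re-thins successor sets down to the stem, so the intersection of the conditions produced may degenerate (say, to a single branch). Repairing this is the main obstacle. I would handle it as a fusion: equip $\P_\textup{\textsf{CNF}}$ with the preorders $q\le_n p$ (``$q\le p$, and the first $n+1$ splitting levels of $q$ and $p$ agree''), and arrange the bookkeeping so that at stage $n$ only the part of the tree above the $n$-th splitting level is modified, while still, over the whole construction, confining the range of $\dot f$ to a single countable set $A\in V$; then $q:=\bigcap_n q_n$ is a legitimate element of $\P_\textup{\textsf{CNF}}$ with $q\Vdash\range(\dot f)\subseteq\check A$, hence $\range(\dot f)$ bounded below $\omega_1$, as required. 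Making this bookkeeping work — reconciling ``never disturb low levels again'' with ``trap the range in a countable set'' — is the technical heart of the argument, and it is this mechanism that reappears, amplified by an appeal to $\BFP$, in the proof of \autoref{maintheorem}.
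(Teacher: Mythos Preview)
Your one-step lemma is correct, and the rank-and-pigeonhole argument you sketch for it is sound. The gap is in the fusion. You write that the bookkeeping must ``confine the range of $\dot f$ to a single countable set $A\in V$'' while never disturbing low splitting levels again, and you call this ``the technical heart'' --- but you do not supply it, and the straightforward attempts fail. If at stage $n$ you apply the one-step lemma above each node $t$ of the $n$-th splitting front, you get $q_{n+1}\le_n q_n$, but the set of possible values $\{\alpha_{n,t}:t\}$ has size $\aleph_2$, so the resulting bound on $\range(\dot f)$ is useless. If instead you apply the one-step lemma once at the stem to decide $\dot f(n)$ outright, you get $q_{n+1}\le^* q_n$ but not $q_{n+1}\le_n q_n$, and the $\le^*$-sequence need not have a lower bound in $\P_\textup{\textsf{CNF}}$. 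Your pigeonhole trick from the one-step lemma does not help here, because it thins the splitting sets you have just promised to freeze. So this is not bookkeeping: it is a missing idea.

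The paper does not prove \autoref{namba-pres} but indicates (in the footnote to the game $\mathcal{G}_k$) what the standard argument is, and it is genuinely different from your outline. One shows that $\P_\textup{\textsf{CNF}}$ preserves stationary subsets of $\omega_1$: given a name $\dot C$ for a club, one defines for each $k<\omega_1$ a game in which Player $\pII$ tries to pick successor directions and direct extensions deciding ever-larger elements of $\dot C$ below $k$; Gale--Stewart plus a countable elementary submodel $M\prec H(\theta)$ produces some $k=M\cap\omega_1$ at which Player $\pII$ wins, and \emph{then} a fusion along $\pII$'s winning strategy builds a condition forcing $k\in\dot C$. The game and the submodel are exactly the device that manufactures the single countable bound you are lacking; the fusion only works because the strategy has already been pinned down. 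It is this mechanism, not the one-step lemma plus a naive fusion, that is imitated and amplified in the proof of \autoref{maintheorem}.
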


 Now we prove the main theorem.

\begin{proof}[Proof of \autoref{maintheorem}] Suppose for contradiction that $(p',\dot{c}') \in \P \ast \dot{\Q}$ forces that $\dot{F}:\omega_1 \to \ON$ is a $\P \ast \dot{\Q}$-name for a new function whose proper initial segments are in $V$.

\begin{claim}\label{dichotomy-claim} One of the following holds:

\begin{enumerate}

\item $\BFP$ fails.


\item For all $(p,\dot{c}) \le (p',\dot{c}')$ and all $X \in [V]^{\le \omega_1} \cap V$ it is not the case that $(p,\dot{c}) \Vdash \textup{``} \{\dot{F} \rest i: i < \omega_1 \} \subseteq X \textup{''}$.

\end{enumerate}

\end{claim}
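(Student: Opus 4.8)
The plan is to establish the dichotomy by proving its contrapositive: \emph{if clause~(2) fails, then $\BFP$ fails}. So suppose there are $(p,\dot{c}) \le (p',\dot{c}')$ and $X \in [V]^{\le \omega_1} \cap V$ with $(p,\dot{c}) \Vdash \{\dot{F}\rest i : i<\omega_1\} \s \check{X}$. Since the point of $\dot F$ is to witness that $\P \ast \dot\Q$ lacks the weak $\omega_1$-approximation property, we may assume $(p',\dot c')$---and hence $(p,\dot c)$---forces $\dot F \notin V$. The goal is to extract from this data an $\omega_1$-sized tree in $V$ together with an $\omega_1$-preserving forcing that adds a cofinal branch to it.

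The tree to use is the \emph{tree of decided approximations}. Working in $V$, let $T$ be the set of all $s \in X$ that are functions with domain an ordinal below $\omega_1$ and for which some condition $(q,\dot d) \le (p,\dot c)$ forces $\dot F \rest \dom(s) = \check s$, ordered by end-extension. Then $T \in V$, and $T$ is automatically closed under initial segments, hence a tree: if $(q,\dot d) \Vdash \dot F\rest\dom(s) = \check s$ and $j < \dom(s)$, then $(q,\dot d) \Vdash \dot F \rest j = \check{(s\rest j)}$, and since $(p,\dot c)$ forces \emph{every} initial segment of $\dot F$ into $X$, we get $s\rest j \in X$, so $s\rest j \in T$ with the same witness. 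For each $i<\omega_1$, extending $(p,\dot c)$ to decide which element of $X$ the name $\dot F\rest i$ denotes produces a node of $T$ of domain $i$, so $T$ has height $\omega_1$; and as $|T| \le |X| \le \omega_1$ while a tree with $\omega_1$ nonempty levels has at least $\omega_1$ nodes, $T$ is $\omega_1$-sized.

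Now I force. Let $\mathbb{S}$ be the restriction of $\P \ast \dot\Q$ to the conditions below $(p,\dot c)$. Since $\P = \P_\textup{\textsf{CNF}}$ preserves $\omega_1$ by \autoref{namba-pres}, $\dot\Q$ is forced to be countably closed and so preserves $\omega_1$ over $V^\P$, and restricting to the conditions below a fixed condition does not affect $\omega_1$-preservation, the forcing $\mathbb{S}$ is $\omega_1$-preserving. Let $G \ast H$ be $\mathbb{S}$-generic over $V$ (equivalently, $\P \ast \dot\Q$-generic with $(p,\dot c) \in G \ast H$) and put $F = \dot F^{G\ast H} : \omega_1 \to \ON$. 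For each $i < \omega_1$, some condition of $G \ast H$ below $(p,\dot c)$ forces $\dot F\rest i$ equal to the concrete object $F\rest i \in X$, so $F\rest i \in T$; hence $\{F\rest i : i < \omega_1\}$ is a chain through $T$ meeting every level, i.e.\ a cofinal branch. This branch cannot lie in $V$, since its union is $F$, which $(p,\dot c)$ forces outside $V$. Thus the $\omega_1$-preserving forcing $\mathbb{S}$ adds a cofinal branch to the $\omega_1$-sized tree $T \in V$, so $\BFP$ fails, which proves the claim.

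I expect no serious obstacle: the whole argument is the standard ``tree of approximations'' move. The two points needing a little care are that $T$ is genuinely a tree---which works precisely because $(p,\dot c)$ forces \emph{all} initial segments of $\dot F$ into the \emph{single} ground-model set $X$, so that initial segments of nodes remain inside $X$---and that the cofinal branch produced is genuinely new, which is exactly what the reduction to a name forced not to lie in $V$ provides. Note that the proof of the claim does not invoke $\BFP$; that is why the conclusion is phrased as a dichotomy, so that the remainder of the proof of \autoref{maintheorem} may simply assume clause~(2).
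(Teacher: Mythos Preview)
Your proof is correct and follows essentially the same route as the paper's: assume clause~(2) fails, view the ground-model set $X$ (or a subtree of it) as an $\omega_1$-sized tree under end-extension, and observe that the $\omega_1$-preserving forcing $\P\ast\dot\Q$ below $(p,\dot c)$ adds the new cofinal branch $\{\dot F\rest i:i<\omega_1\}$, so $\BFP$ fails. The only cosmetic difference is that you explicitly carve out the subtree $T\subseteq X$ of \emph{decided} approximations and verify downward closure and height $\omega_1$, whereas the paper simply takes $X$ itself (after a WLOG reduction making its elements functions) as the tree; your version is a bit more careful but not substantively different.
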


\begin{proof} Suppose that \emph{(2)} does not hold. It follows that there is some $(p,\dot{c}) \le (p',\dot{c}')$ and some $X \in [V]^{\le \omega_1} \cap V$ such that $(p,\dot{c}) \Vdash \textup{``}\{\dot{F} \rest i:i<\omega_1\} \subseteq X\textup{''}$. Without loss of generality, there is a large enough $\tau$ such that all elements of $X$ are functions $\sigma:\gamma \to \tau$ for some $\gamma < \omega_1$. We can assume that $|X|>\aleph_0$ since otherwise it would be implied that $(p,\dot{c})$ forces $\dot{F}$ to have domain bounded in $\omega_1$. Therefore, $X$ is a $\omega_1$-sized tree where the ordering is end-extension, i.e$.$ if $y,z \in X$ then $y \le_X z$ if and only if $z \rest \dom y = y$.  Since $\dot{F}$ is forced to be new, $(p,\dot{c}) \Vdash \textup{``}X$ has a cofinal branch not in $V\textup{''}$. Therefore we have shown that $\P \ast \dot{\Q}$, which preserves $\omega_1$ (\autoref{namba-pres} and then countable closure) and adds a cofinal branch to an $\aleph_1$-sized tree, and hence $\BFP$ fails.\end{proof}

We are assuming that $\BFP$ holds, so for the rest of the proof we will argue that Case \emph{(2)} in \autoref{dichotomy-claim} leads to a contradiction.


Now we introduce some standard notation: If $t \in p \in \P$ then $\ssucc_p(t) = \{t' \in p: t' \sqsupseteq t,|t'|=|t|+1\}$ and $\osucc_p(t) = \{\alpha<\aleph_2:t{}^\frown \langle \alpha \rangle \in p\}$. If $p \in \P$, then $\stem p$ is the $\sqsubseteq$-maximal node such that for all $t' \sqsubseteq t$, $|\ssucc_p(t')|=1$.

We will repeatedly use the following in the remainder of the argument:

\begin{claim}\label{name-claim} Suppose $p \Vdash \textup{``}\dot{c} \in \dot{\Q} \textup{''}$ where $\osucc_p(\stem(p))=Z$ and there is a sequence $\seq{(q_\alpha,\dot{d}_\alpha)}{\alpha \in Z}$ such that $(q_\alpha,\dot{d}_\alpha) \le (p \rest \stem(p) {}^\frown \langle \alpha \rangle,\dot{c})$ for all $\alpha \in Z$. If $q = \bigcup_{\alpha \in Z}q_\alpha$, then there is some $\dot{d}$ such that for all $\alpha \in Z$, $q \Vdash \textup{``}\dot{d} \le \dot{c}_\alpha \textup{''}$ and $q_\alpha \Vdash \textup{``}\dot{d} = \dot{c}_\alpha \textup{''}$.\end{claim}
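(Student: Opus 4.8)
The plan is to produce $\dot{d}$ by the standard one-level name-mixing, using the antichain $\{q_\alpha : \alpha \in Z\}$ sitting at the splitting level $\stem(p)$. First I would dispatch the structural bookkeeping. Each $q_\alpha \le p \rest \stem(p){}^\frown\langle\alpha\rangle$, so all the $q_\alpha$ contain the linear part of $p$ up to and including $\stem(p)$ and are pairwise disjoint strictly above that node. Hence $q = \bigcup_{\alpha \in Z} q_\alpha$ is again a perfect tree of height $\omega$ and width $\aleph_2$ with $\stem(q) = \stem(p)$, with $\osucc_q(\stem(q)) = Z$ (which is unbounded in $\aleph_2$, since $Z$ is the splitting set at the stem of $p$ and therefore has size $\aleph_2$), and with $q \rest \stem(p){}^\frown\langle\alpha\rangle = q_\alpha$ for every $\alpha \in Z$; in particular $q \in \P$ and $q \le p$, so $q \Vdash \dot{c} \in \dot{\Q}$.

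Next I would check that $\{q_\alpha : \alpha \in Z\}$ is predense below $q$; it is an antichain because $q_\alpha$ forces the generic branch to pass through $\stem(p){}^\frown\langle\alpha\rangle$ and these nodes are pairwise incompatible. Given $r \le q$, every branch of $r$ is a branch of $q$ and hence passes through $\stem(q) = \stem(p)$, so $\stem(p) \in r$, and then $\stem(p){}^\frown\langle\alpha\rangle \in r$ for some $\alpha \in Z$; now $r \rest \stem(p){}^\frown\langle\alpha\rangle$ is a condition extending both $r$ and $q_\alpha = q \rest \stem(p){}^\frown\langle\alpha\rangle$. Consequently, in any generic $G$ with $q \in G$ there is exactly one $\alpha \in Z$ with $q_\alpha \in G$.

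With this in hand I would invoke the mixing lemma: since $\{q_\alpha : \alpha \in Z\}$ is predense below $q$ and $q_\alpha \Vdash \dot{d}_\alpha \in \dot{\Q}$ for each $\alpha$ (because $q_\alpha \Vdash \dot{d}_\alpha \le \dot{c}$), there is a $\P$-name $\dot{d}$ with $q \Vdash \dot{d} \in \dot{\Q}$ and $q_\alpha \Vdash \dot{d} = \dot{d}_\alpha$ for all $\alpha \in Z$. Finally I would verify $q \Vdash \dot{d} \le \dot{c}$: given a generic $G \ni q$, let $\alpha$ be the unique index with $q_\alpha \in G$; then $\dot{d}^G = \dot{d}_\alpha^G$ and $\dot{d}_\alpha^G \le_{\dot{\Q}^G} \dot{c}^G$ because $q_\alpha \Vdash \dot{d}_\alpha \le \dot{c}$, so $\dot{d}^G \le \dot{c}^G$; since $G$ was arbitrary, $q \Vdash \dot{d} \le \dot{c}$.

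I do not expect a real obstacle: this is a one-level fusion argument, and the only points that deserve a sentence each are that $q$ is genuinely a condition of $\P_\textup{\textsf{CNF}}$ (immediate from the definition together with $|Z| = \aleph_2$) and the ``every extension passes through the stem'' observation underlying predensity of the $q_\alpha$ below $q$. I would also remark that countable closure of $\dot{\Q}$ plays no role in this claim; it will be used only later, when the claim is applied $\omega$ many times along a branch.
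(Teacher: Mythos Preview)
Your proposal is correct and follows exactly the approach of the paper: the paper's proof is the single sentence ``This is an application of the proof of the Mixing Principle, since $\seq{q_\alpha}{\alpha \in Z}$ is a maximal antichain below $q$,'' and your argument is precisely a fleshed-out version of that, supplying the routine verifications (that $q$ is a condition, that the $q_\alpha$ form a maximal antichain below $q$, and that $q \Vdash \dot d \le \dot c$) which the paper leaves implicit.
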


\begin{proof} This is an application of the proof of the Mixing Principle, since $\seq{q_\alpha}{\alpha \in Z}$ is a maximal antichain below $q$.\end{proof}



Now we define the main idea of the rest of the proof. Let $\varphi(i,(q,\dot{d}))$ denote the formula
\begin{align*}
 i < \omega_1 \wedge & (q,\dot{d}) \in \P \ast \dot{\Q}\wedge (q,\dot{d}) \le (p',\dot{c}') \wedge \exists \seq{a_\alpha}{\alpha \in \osucc_q(\stem(q))} \su \\
&\forall \alpha \in \osucc_q(\stem(q)),(q \rest (\stem(q) {}^\frown \langle \alpha \rangle),\dot{d}) \Vdash `\dot{F} \rest i = a_\alpha\textup{'} \wedge \\
& \forall \alpha, \beta \in \osucc_q(\stem(q)), \alpha \ne \beta \then a_\alpha \ne a_\beta.
\end{align*}


\begin{claim}\label{basic-claim} $\forall j<\omega_1,\forall (p,\dot{c}) \le (p',\dot{c}') \in \P \ast \dot{\Q},\exists i \in (j,\omega_1), \exists (q,\dot{d}) \le (p,\dot{c}) \su \stem(p)=\stem(q) \wedge \varphi(i,(q,\dot{d}))$.\end{claim}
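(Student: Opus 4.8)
The plan is to prove \autoref{basic-claim} by a fusion-style construction combined with the dichotomy of \autoref{dichotomy-claim}, specifically by exploiting that we are in Case \emph{(2)}: below \emph{any} condition $\le (p',\dot c')$ and for \emph{any} $X\in[V]^{\le\omega_1}\cap V$, it is not forced that $\{\dot F\rest i:i<\omega_1\}\subseteq X$. Fix $j<\omega_1$ and $(p,\dot c)\le(p',\dot c')$. First I would observe that it suffices to work below $p\rest\stem(p){}^\frown\langle\alpha\rangle$ for the $\aleph_2$-many immediate successors $\alpha\in\osucc_p(\stem(p))$ of the stem (if the stem splits only $1$-fold there is nothing to do until we pass to a proper extension, so we may assume $|\osucc_p(\stem(p))|=\aleph_2$; otherwise replace $p$ by an end-extension above the stem that splits, which does not change that we are proving an $\exists(q,\dot d)\le(p,\dot c)$ statement with $\stem(p)=\stem(q)$ — here one has to be mildly careful, and I will actually want to keep $\stem(q)=\stem(p)$, so the cleanest route is to assume WLOG that $\stem(p)$ itself is a splitting node of $p$).

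The core step is to build, for each of the $\aleph_2$-many $\alpha\in\osucc_p(\stem(p))$, a decreasing sequence of conditions deciding more and more of $\dot F$, and to extract from them a single ordinal $i>j$ at which the values pulled off along different $\alpha$'s are pairwise distinct. Concretely: suppose toward a contradiction that the conclusion fails for this $j$ and $(p,\dot c)$, i.e.\ for every $i\in(j,\omega_1)$ and every $(q,\dot d)\le(p,\dot c)$ with $\stem(q)=\stem(p)$, $\varphi(i,(q,\dot d))$ fails. Using \autoref{name-claim} (the mixing argument) repeatedly, one can then show that for each $i$ the ``values of $\dot F\rest i$ as seen from the successors of the stem'' must collapse onto a small set: there is a single condition $(q_i,\dot d_i)\le(p,\dot c)$, $\stem(q_i)=\stem(p)$, and an $X_i\in V$ with $|X_i|\le\aleph_1$ (in fact we can aim for $|X_i|\le\aleph_0$, or at worst of size $<\aleph_2$, which is what matters) such that $(q_i,\dot d_i)\Vdash\dot F\rest i\in X_i$. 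Indeed, if along a splitting above the stem the possible values of $\dot F\rest i$ never became pairwise distinct on an $\aleph_2$-sized set of successors, then by a pressing-down / counting argument on $\aleph_2$ (the width is exactly $\aleph_2$, a regular cardinal) one thins to an $\aleph_2$-branching subtree on which $\dot F\rest i$ is decided to a single value — contradicting that $\dot F$ is forced to be new, or at least forcing the value-set to be small. Taking a fusion $(q,\dot d)=\bigcap_i(q_i,\dot d_i)$ through all $i<\omega_1$ (legitimate because $\P$ admits fusions of $\omega_1$-length decreasing sequences that fix the stem and thin successor sets, and $\dot\Q$ is countably closed so the $\dot d_i$ amalgamate by \autoref{name-claim}-style mixing and closure) and setting $X=\bigcup_{i<\omega_1}X_i$, which lies in $V$ and has size $\le\aleph_1$, we get $(q,\dot d)\Vdash\{\dot F\rest i:i<\omega_1\}\subseteq X$ — precisely what Case \emph{(2)} forbids. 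This contradiction yields the claim.

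The step I expect to be the main obstacle is the fusion: classical Namba forcing is notoriously badly behaved with respect to fusion (it does not have a Prikry-type or Axiom A fusion in the naive sense, since one cannot in general thin the whole tree level-by-level while keeping it a legal Namba condition), so the construction deciding $\dot F\rest i$ for all $i<\omega_1$ simultaneously has to be done with care. The right framework is presumably the one behind \autoref{namba-pres}: one does not thin everywhere, but rather works with a rank/derived-tree argument, handling one ``bad'' successor of the stem at a time and using that $\aleph_2$ is regular and the width is exactly $\aleph_2$ to always have room to continue. I would model the bookkeeping on the proof that $\P_\textup{\textsf{CNF}}$ preserves $\omega_1$ (the ``proof of this fact will to some extent be imitated'' remark in the excerpt is the signpost), threading the $\dot\Q$-names through via \autoref{name-claim} at each splitting node so that countable closure of $\dot\Q$ does the amalgamation work on the side of $\dot{\Q}$. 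A secondary subtlety is ensuring $\stem(q)=\stem(p)$ is maintained throughout — this is automatic if at every stage we only ever modify $q$ strictly above its stem, which the construction should respect.
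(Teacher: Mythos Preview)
Your approach has a genuine gap, and it is precisely the one you flag yourself: the $\omega_1$-length fusion of the conditions $(q_i,\dot d_i)$ does not exist for classical Namba forcing. There is no mechanism to take a $\le^*$-decreasing (stem-preserving) sequence of length $\omega_1$---or even of length $\omega$---and produce a lower bound with the same stem; this is exactly what distinguishes $\P_\textup{\textsf{CNF}}$ from the Laver--Namba variants with a nice ideal (cf.\ \autoref{prikry-closure}). Your appeal to ``model the bookkeeping on the proof of \autoref{namba-pres}'' does not help: that proof is a game argument producing a single fusion of length $\omega$ along a \emph{branch}, not an $\omega_1$-fusion of full conditions. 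So the step ``take a fusion $(q,\dot d)=\bigcap_i(q_i,\dot d_i)$ through all $i<\omega_1$'' simply fails, and with it the derivation of a single $X\in[V]^{\le\omega_1}$ contradicting Case \emph{(2)}.

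The paper's proof avoids any fusion entirely by reversing the roles of $i$ and $\alpha$. Instead of fixing $i$ and thinning the successors of the stem, one runs an induction over the successors $\alpha\in\osucc_p(\stem p)$: for each $\alpha$ in turn, find \emph{some} $i_\alpha>j$ and a condition below $p\rest\stem(p)^\frown\langle\alpha\rangle$ deciding $\dot F\rest i_\alpha$ to a value $a_\alpha$ distinct from all $a_\beta$ with $\beta<\alpha$. If this ever fails at some $\gamma$, then for the remaining $\alpha\ge\gamma$ every extension must force every initial segment of $\dot F$ into the $\le\aleph_1$-sized set $\{a_\beta:\beta<\gamma\}$; a single application of \autoref{name-claim} amalgamates these into one condition violating Case \emph{(2)}. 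If it never fails, pigeonhole on the $\aleph_1$ possible values of $i_\alpha$ over $\aleph_2$-many $\alpha$ yields a common $i$, and one more amalgamation via \autoref{name-claim} gives the desired $(q,\dot d)$ with $\varphi(i,(q,\dot d))$. The point is that by letting $i_\alpha$ vary with $\alpha$, the whole argument requires only a single mixing step, not a transfinite fusion.
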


\begin{proof}[Proof of Claim] First we establish a slightly weaker claim: $\forall j<\omega_1, \forall (p,\dot{c}) \in \P \ast \dot{\Q}, \forall t \in p \su |\ssucc_p(t)|>1$, there is an $\aleph_2$-sized set $W \subset \osucc_p(t)$ and a sequence $\seq{(q_\alpha,\dot{c}_\alpha),i_\alpha,a_\alpha}{\alpha \in W}\su$:

\begin{itemize}
\item $\forall \alpha \in \osucc_p(t)$,
\begin{itemize}
\item $i_\alpha \in (j,\omega_1)$,
\item $(q_\alpha,\dot{c}_\alpha) \le (p \rest (t {}^\frown \langle \alpha \rangle),\dot{c})$,
\item $(q \rest t {}^\frown \langle \alpha \rangle, \dot{c}_\alpha) \Vdash ``\dot{F} \rest i_\alpha = a_\alpha \textup{''}$,
\end{itemize}
\item $\alpha \ne \beta \then a_\alpha \ne a_\beta$.
\end{itemize}

Consider $(p,\dot{c}) \in \P \ast \dot{\Q}$ and $t := \stem(p)$. Inductively choose a sequence $\seq{(q_{\alpha_\xi},\dot{d}_{\alpha_\xi}),i_{\alpha_\xi},a_{\alpha_\xi}}{\xi<\aleph_2}$ where $\seq{\alpha_\xi}{\xi<\aleph_2} \subseteq \osucc_p(t)$ as follows: Suppose $\seq{(q_{\alpha_\xi},\dot{d}_{\alpha_\xi},a_{\alpha_\xi}),i_{\alpha_\xi}}{\xi<\eta}$ is defined. Then if possible, choose $\beta \in \osucc_p(t) \setminus (\sup_{\xi<\eta}\alpha_\xi)$ and $(q_\beta,\dot{d}_\beta)$ such that $q_\beta \le p \rest t {}^\frown \langle \beta \rangle$ and $(q_\beta,\dot{d}_\beta) \Vdash ``\dot{F} \rest i_\beta = a_\beta\textup{''}$ for some $a_\beta$ with $a_\beta \notin \{a_{\alpha_\xi}:\xi<\eta\}$. Moreover let $i_\beta$ be minimal such that $((q_\beta,\dot{d}_\beta),i_\beta)$ fitting this description can be found. Then set $\alpha_\eta := \beta$. If it is not possible to find such a $\pair{q_\beta}{\dot{d}_\beta}$ and $i_\beta$ and $a_\beta$, then halt the construction.

Suppose for contradiction that the slightly weaker claim fails. This means that the construction in the above paragraph must halt. Let $\eta < \aleph_2$ be the least ordinal where it is not possible to continue the construction in the paragraph above and let $B:=\{a_{\alpha_\xi}:\xi<\eta\}$. Then for all $\alpha \in \osucc_p(t) \setminus (\sup_{\xi<\eta} \alpha_\xi)$ we have $(p \rest t {}^\frown \langle \alpha \rangle,\dot{c}) \Vdash ``\bigcup_{i<\omega_1}\dot{F} \rest i \subseteq B \textup{''}$. Let $q := \bigcup_{\alpha \in \osucc_p(t) \setminus (\sup_{\xi<\eta}\alpha_\xi)}p \rest (t {}^\frown \langle\alpha \rangle)$. Then $q \in \P$ and $q \le p$. Furthermore, it is the case that $(q,\dot{d}) \Vdash ``\bigcup_{i<\omega_1}\dot{F} \rest i \subseteq B \textup{''}$ where $\dot{d}$ is obtained by applying \autoref{name-claim}. But $B$ has size $\le \aleph_1$, contradicting the assumption that we are working in Case \emph{(2)} of \autoref{dichotomy-claim}.


Now we have established the slightly weaker claim. Choose a $\aleph_2$-sized subset $W \subseteq \osucc_p(t)$ and some $i \in (j,\omega_1)$ such that $i_\alpha = i$ for all $\alpha \in W$. Then let $q := \bigcup_{\alpha \in W}q_\alpha$ and apply \autoref{name-claim} to obtain $\dot{d}$.\end{proof}

We plan to build a fusion sequence using \autoref{basic-claim}. To this end, we define a game $\mathcal{G}_k$ for $k<\omega_1$.\footnote{This is where the proof of \autoref{namba-pres} is being imitated (see \cite[Section 2.1, Fact 5]{Cummings-Magidor2011}). Preservation of $\omega_1$ for classical Namba forcing (and for various other Namba forcings) is achieved by showing that the forcing preserves stationary subsets of $\omega_1$, in which case the game involves deciding ordinals in a club subset of $\omega_1$.}


Suppose round $n$ of the game is being played where $n=0$ is the first round. If $n=0$ then let $((q_*,\dot{c}_*),i_*)$ be $((p',\dot{c}'),0)$ (recall that $(p',\dot{c}')$ is the condition from the beginning of the proof of \autoref{maintheorem}). Otherwise if $n>0$ let $((q_*,\dot{c}_*),i_*)$ be $((q_n,\dot{c}_n),i_n)$. First Player $\pI$ chooses an $\aleph_1$-sized subset $Z_n \subseteq \osucc_{q_*}(\stem(q_*))$ and some $\delta_n<k$. Then Player $\pII$ chooses some $\alpha \in \osucc_{q_*}(\stem(q_*)) \setminus Z_n$ and some condition $(q_n,\dot{d}_n) \le (q_* \rest \stem q_* {}^\frown \langle \alpha \rangle,\dot{c}_*)$ and some $i_n \in (\delta_n,k)$ such that $\varphi((q_n,\dot{d}_n),i_n)$ holds. Hence we have the following diagram:


\begin{center}
\def\arraystretch{1.25}
\begin{tabular}{c | c | c | c | c | c | c } 
  \textup{Player \textsf{I}} & $Z_0,\delta_0$ &  & $Z_1,\delta_1$ &  & $Z_2,\delta_2$ &  \\ 
 \hline
  \textup{Player \textsf{II}} &  & $(q_0,\dot{d}_0),i_0$ &  & $(q_1,\dot{d}_1),i_1$ &  & $(q_2,\dot{d}_2),i_2$ 
\end{tabular}
\end{center}

Player $\textsf{\textup{II}}$ loses at some stage $n$ if they cannot find appropriate $(q_n,\dot{c}_n)$ and $i_n$ witnessing $\varphi(i_n,(q_n,\dot{c}_n))$ for some $i<k$, i.e$.$ if they cannot in particular find such $i_n \in (\delta_n,k)$. Otherwise Player $\pII$ wins.
	
\begin{claim}\label{game-claim} There is some $k<\omega_1$ such that Player $\textup{\textsf{II}}$ has a winning strategy in $\mathcal{G}_k$.\footnote{Technically we get club-many such $k<\omega_1$.}\end{claim}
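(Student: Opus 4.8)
The plan is to argue by contradiction: assume that for every $k<\omega_1$ Player $\pI$ has a winning strategy $\tau_k$ in $\mathcal{G}_k$, and derive a violation of Case (2) of \autoref{dichotomy-claim}. The first step is to observe that \autoref{basic-claim} says precisely that Player $\pII$ can always respond at a single round provided $k$ is large enough relative to the parameter $\delta_n$ played by $\pI$; so a failure of $\pII$ to win $\mathcal{G}_k$ for a \emph{fixed} $k$ must come from the unboundedly-many-rounds structure of the game, not from any one round. This is why we should look at the set $C$ of $k<\omega_1$ for which $\pII$ has a winning strategy and show it is club (or at least nonempty, and the footnote's stronger ``club-many'' claim follows by the same reflection argument): closure is immediate since a play of $\mathcal{G}_k$ for $k=\sup_n k_n$ restricts to a play of each $\mathcal{G}_{k_n}$, and for unboundedness we use a Skolem-hull / Löwenheim–Skolem reflection argument on a large structure coding $\P\ast\dot{\Q}$, $\dot{F}$, the relevant names, and the relation $\varphi$.

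Concretely, I would fix $\theta$ large, a well-order of $H(\theta)$, and build a continuous increasing chain $\seq{N_\xi}{\xi<\omega_1}$ of countable elementary submodels of $(H(\theta),\in,\dots)$ containing all the relevant parameters, with $k_\xi := N_\xi\cap\omega_1\in\omega_1$. The claim is that every such $k_\xi$ lies in $C$: a winning strategy for $\pII$ in $\mathcal{G}_{k_\xi}$ is obtained by playing inside $N_\xi$. When $\pI$ plays $(Z_n,\delta_n)$ with $\delta_n<k_\xi=N_\xi\cap\omega_1$, elementarity and \autoref{basic-claim} (applied inside $N_\xi$, which is legitimate because $(q_*,\dot{c}_*)$, $j:=\delta_n$, and the ambient data are in $N_\xi$) give a response $((q_n,\dot{d}_n),i_n)$ with $i_n\in(\delta_n,\omega_1)$; and since the witnessing object can be found in $N_\xi$ we have $i_n\in N_\xi\cap\omega_1=k_\xi$, so $i_n\in(\delta_n,k_\xi)$ as required. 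Here one must be slightly careful that $\pI$'s move $Z_n$ need not be in $N_\xi$, but $\pII$ only needs to pick \emph{some} $\alpha\in\osucc_{q_*}(\stem(q_*))\setminus Z_n$; since $\osucc_{q_*}(\stem(q_*))$ has size $\aleph_2$ and $Z_n$ has size $\aleph_1$, and \autoref{basic-claim} in fact produces (via the ``slightly weaker claim'' in its proof) an $\aleph_2$-sized set $W$ of suitable $\alpha$, there is such an $\alpha$ with the whole package $((q_n,\dot{d}_n),i_n)$ definable from parameters in $N_\xi$ together with $Z_n$ — and because $W$ has size $\aleph_2>\aleph_1=|Z_n|$ we may just take, say, the $\le$-least $\alpha\in W\setminus Z_n$, which is then in $N_\xi$ is not needed; what is needed is only that $i_n<k_\xi$, which holds because $i_\alpha$ was chosen minimal and this minimal value, being $<\omega_1$ and definable in $N_\xi$ from $Z_n$-independent data (the minimum over $W$ of the minimal $i_\alpha$), lies in $N_\xi$.

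Thus $C\supseteq\{k_\xi:\xi<\omega_1\}$ is unbounded, and together with closure $C$ is club, which proves the claim (and its footnote). The main obstacle I anticipate is exactly the bookkeeping in the previous paragraph: ensuring that the $i_n$ produced by $\pII$'s strategy genuinely falls below $k_\xi$ and not merely below $\omega_1$. The clean way to handle this is to have $\pII$ fix, once and for all inside $N_\xi$, a function assigning to each pair $(q_*,\dot{c}_*,\delta)$ (with the right form) a canonical witness $((q,\dot{d}),i)$ to \autoref{basic-claim} — such a function exists in $H(\theta)$ by choice and hence, by elementarity, there is one in $N_\xi$ — and then $\pII$ simply applies this fixed function; its output on inputs from $N_\xi$ stays in $N_\xi$, so $i\in N_\xi\cap\omega_1=k_\xi$. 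The only input that is a priori outside $N_\xi$ is $Z_n$, so the canonical function should be arranged to depend on $Z_n$ only through the harmless choice of $\alpha$ (e.g.\ ``least element of the canonical $\aleph_2$-sized $W$ not in $Z_n$''), leaving the value $i$ — and hence membership in $k_\xi$ — independent of $Z_n$. With that in place the reflection argument goes through and the claim follows.
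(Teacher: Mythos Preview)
Your approach has a genuine gap at the inductive step. You correctly observe that Player~$\pI$'s move $Z_n$ need not lie in $N_\xi$, and you try to finesse this by letting the canonical choice function produce an $\aleph_2$-sized set $W$ and an $i$ depending only on $(q_*,\dot c_*,\delta_n)$, so that $i\in N_\xi$ while $\alpha$ is merely some element of $W\setminus Z_n$. This works for the \emph{first} round: with $(q_*,\dot c_*)=(p',\dot c')\in N_\xi$ and $\delta_0<k_\xi$, the output $i_0$ is in $N_\xi$ and hence below $k_\xi$. But Player~$\pI$ can certainly play $Z_0\supseteq W\cap N_\xi$ (the right-hand side is countable), forcing $\alpha_0\notin N_\xi$ and therefore $(q_0,\dot d_0)=(q_{\alpha_0},\dot d_{\alpha_0})\notin N_\xi$. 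Now at round~$1$ your canonical function must be applied to the input $(q_0,\dot d_0,\delta_1)$, which is \emph{outside} $N_\xi$, and there is no reason its output $i_1$ should lie below $k_\xi$. The problem compounds at every subsequent round; your strategy simply does not keep the relevant parameters inside the model after the first move.

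The paper's proof avoids this by \emph{not} trying to describe a strategy for $\pII$ directly. Instead it argues by contradiction, putting the whole sequence $\seq{\sigma_i}{i<\omega_1}$ of Player~$\pI$'s winning strategies into the model $M$, and then defeating $\sigma_k$ (for $k=M\cap\omega_1$) by a run in which all of $\pII$'s moves are in $M$. The key device is that at each stage one replaces the single forbidden set $Z_{n+1}=\sigma_k(\ldots)\notin M$ by the union $Y$ of the sets $\sigma_i(\ldots)$ over \emph{all} $i$ for which the play so far is consistent with $\pI$ using $\sigma_i$; this $Y$ is definable from $\pII$'s previous moves and the sequence of strategies, all of which are in $M$, so $Y\in M$ and still has size $\le\aleph_1$. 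One then picks $\alpha\in M$ avoiding $Y$ (hence avoiding $Z_{n+1}$), and applies \autoref{basic-claim} inside $M$ to get $(q_{n+1},\dot d_{n+1},i_{n+1})\in M$, so $i_{n+1}<k$. This is exactly the missing idea: you need a way to cover $\pI$'s move by something in the model, and the family of \emph{all} of $\pI$'s strategies is what provides it.
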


\begin{proof} Suppose this is not the case. For all $i<\omega_1$, $\mathcal{G}_i$ is an open game. Therefore by the Gale-Stewart Theorem, there is a winning strategy $\sigma_i$ for Player $\textup{\textsf{I}}$ in $\mathcal{G}_i$.

Let $\theta$ be large enough for $H(\theta)$ to contain the sets mentioned in the upcoming argument, so we consider the structure $\mathcal{H} := (H(\theta),\in,<_\theta,\P,\dot{F},\seq{\sigma_i}{i<\omega_1})$ where $<_\theta$ is a fixed well-ordering of $H(\theta)$ that allows for Skolem functions. Let $M \prec \mathcal{H}$ be a countable elementary submodel. Then $M \cap \omega_1 \in \omega_1$, so let us denote $k:= M \cap \omega_1$.

We will construct a run of the game $\mathcal{G}_k$ such that Player $\pI$ uses the strategy $\sigma_k$ but nonetheless loses the game. We will define the sequence
\[
 (Z_0,\delta_0),((q_0,\dot{d}_0),i_0),(Z_1,\delta_1), ((q_1,\dot{d}_1),i_1), \ldots
 \]
so that Player $\pII$'s moves are all in $M$ even though $\sigma_k \notin M$.

Assume for notational convenience that the game is defined with the opening move $((p',\dot{c}'),0)$ by Player $\pII$. Suppose we are considering stage $n$ of the game and that $((q_n,\dot{d}_n),i_n)$ is defined (we let it be the opening move if $n=0$). Let $(Z_{n+1},\delta_{n+1})$ be obtained by $\sigma_k$ as applied to the previous moves. Then let $W$ be the set of indices $i$ for which $((p',\dot{c}'),0),\ldots,((q_,\dot{d}_n),i_n)$ is a sequence of Player $\pII$'s moves in the game $\mathcal{G}_i$ where Player $\pI$ is using $\sigma_i$. This set is nonempty and $W \in M$. Therefore if
\[
Y = \bigcup\{Z_i:i \in W, \sigma_i(((q_{-1},\dot{d}_{-1}),i_{-1}),\ldots,((q_,\dot{d}_n),i_n))=(Z_i,\delta^*)\}
\]
then $Y \in M$, and moreover $|Y|<\aleph_2$ because it is the union of at most $\aleph_1$-many $\aleph_1$-sized sets. Choose $\alpha \in \osucc_{q_n}(\stem(q_n)) \setminus Y$ and apply \autoref{basic-claim} to $(q_n \rest \stem(q_n) {}^\frown \langle \alpha \rangle,\delta_{n+1})$ in order to obtain $((q_{n+1},\dot{d}_{n+1}),i_{n+1})$. By elementarity, we can obtain $((q_{n+1},\dot{d}_{n+1}),i_{n+1}) \in M$, so $i_{n+1}<k$, hence we can continue the construction.

Since Player $\pII$ does not lose at any initial stage, they win the run of the game $\mathcal{G}_k$. Hence we have obtained our contradiction.
 \end{proof}


Now we will build a condition $(q,\dot{d}) \in \P \ast \dot{\Q}$ by a fusion process in such a way that any stronger condition deciding $\dot{F} \rest k$ will also code a cofinal sequence in $\omega_2$ that exists in the ground model $V$, thus obtaining a contradiction.

For this we define a bit more standard notation. For all $n<\omega$, define the \emph{$n\th$-order splitting front} of some $p \in \P$ as the set of $t \in p$ such that $|\osucc_p(t)|>1$ and such that there are at most $n$-many $t' \sqsubseteq t$ with $t' \ne t$ that have this property.

Fix a sequence $\seq{\delta_n}{n<\omega}$ converging to $k$. We will define $\seq{(p_n,\dot{c}_n)}{n < \omega}$ by induction on $n<\omega$ in such a way that:

\begin{enumerate}

\item $\seq{p_n}{n<\omega}$ is a fusion sequence,

\item for all $n<\omega$, $p_{n+1} \Vdash \textup{``}\dot{c}_{n+1} \le \dot{c_n}\textup{''}$,

\item For all $n<\omega$, if $A_n$ is the $n\th$-order splitting front of $p_n$, then for all $t \in A_n$, the following is the case: Let $s_0 \sqsubseteq s_1 \sqsubseteq \ldots \sqsubseteq s_n=t$ be the sequence of all splitting nodes up to and including $t$. Then there is a sequence $Z^t_0,\ldots,Z^t_n$ such that
\[
(Z^t_0,\delta_0),((p_0 \rest s_0,\dot{d}_0),i_0),\ldots,(Z^t_n,\delta_n),((p_n \rest s_n,\dot{d}_n),i_n)
\]
is a run of the game $\mathcal{G}_k$ in which Player $\pII$'s moves are determined by the winning strategy obtained in \autoref{game-claim}.
\end{enumerate}

Note that the third point implies the following: For all positive $n<\omega$, if $A_n$ is the $n\th$-order splitting front of $p_n$, then for all $t \in A_n$, there is $i_t \in (\delta_n,k)$ and a sequence $\seq{a_s}{s \in \ssucc_{p_n}(t)}$ witnessing that $\varphi(i_t,(p_n \rest t,\dot{c}_n))$ holds.

We do this as follows: Start with stage $-1$ for convenience and let $(p_{-1},\dot{c}_{-1})=(p,\dot{c})$. Then $A_{-1} = \stem(p_{-1})$. Now assume we have defined $p_{n-1}$, that $A_n$ is its $n\th$-order splitting front, and we are considering $t \in A_n$. Let $s_0 \sqsubseteq s_1 \sqsubseteq \ldots \sqsubseteq s_{n-1} = t$ be the sequence of splitting nodes up to and including $t$. Let $S_t$ be the set of $\alpha \in \osucc_{p_{n-1}}(t)$ such that for some $Z^\alpha_n$, the winning strategy for Player $\pII$ applied to the sequence
\[
(Z^t_0,\delta_0),((p_0 \rest s_0,\dot{d}_0),i_0),\ldots,(Z^t_{n-1},\delta_{n-1}),((p_{n-1} \rest s_{n-1},\dot{d}_{n-1}),i_{n-1}),(Z^\alpha_n,\delta_n)
\]
produces some $((q_n,\dot{d}_n),i_n)$ where $q_n \le p_{n-1} \rest t {}^\frown \langle \alpha \rangle$. We claim that $|S_t|=\aleph_2$. Otherwise Player $\pI$ would have a winning move for the sequence
\[
(Z^t_0,\delta_0),((p_0 \rest s_0,\dot{d}_0),i_0),\ldots,(Z^t_{n-1},\delta_{n-1}),((p_{n-1}\rest
s_{n-1},\dot{d}_{n-1}),i_{n-1})
\]
by playing $S_t$ as the subset-of-$\aleph_2$-component of their move. For each such $t \in A_n$ and $\alpha \in S_t$, choose $(q_{t,\alpha},\dot{d}_{t,\alpha})$ to be produced by the winning strategy for Player $\pII$ as the $\P \ast \dot{\Q}$ component of their move. Now let $p_n = \bigcup \{q_{t,\alpha}:t \in A_n,\alpha \in S_t\}$. Then use \autoref{name-claim} to obtain $\dot{c}_{n}$ from the $\dot{c}_{t,\alpha}$'s.


Now let $q$ be the fusion limit of $\seq{p_n}{n<\omega}$ and let $\dot{d}$ be the name canonically forced by $q$ to be a lower bound for $\seq{\dot{c}_n}{n<\omega}$. Then $(q,\dot{d})$ forces that the generic sequence for $\P$ can be recovered from $\dot{F} \rest k$ as follows: Let $(r,\dot{e}) \le (q,\dot{d})$ force $\dot{F} \rest  k = g \in V$. We can inductively choose a cofinal branch $b \subset r$ such for all $t \in b$, for some $i<k$, $g \rest i_t = a_t$. Specifically, we construct $b$ by defining a sequence $\seq{s_n}{n<\omega}$ of splitting nodes as follows: Let $s_0 = \stem q$. Given $s_n$, let $s^*_{n+1} \sqsupseteq s_n$ be the next splitting node. Then since $\varphi(i_t,(r \rest s^*_{n+1},\dot{e}))$ holds for some $i_t \in (\delta_n,k)$, there is some $\alpha \in \osucc_r(s^*_{n+1})$ such that $(r \rest s^*_{n+1} {}^\frown \langle \alpha \rangle,\dot{e}) \Vdash \textup{``}g \rest i_t = a_t \textup{''}$. Then let $s_{n+1} = s_{n+1}^* {}^\frown \langle \alpha \rangle$. Then let $b = \{t \in r: \exists n<\omega, t \sqsubseteq s_n\}$. This implies that $(r,\dot{e})$ forces that the generic object is equal to $b$, i.e$.$ that $\bigcap \Gamma(\P)=b \in V$, but this is not possible.

Hence $(q,\dot{d}) \Vdash ``\dot{F} \rest k \notin V\textup{''}$ lest we obtain the contradiction from the previous paragraph. This contradicts the premise from the beginning of the proof that initial segments of $\dot{F}$ are in $V$.\end{proof}



Now we can work on proving \autoref{maincorollary}. This is mostly a matter of noting some statements in the established theory. First we state the fact that is most frequently used to produce various sorts of guessing models:

\begin{fact}[Woodin]\label{woodin} (Implicit in \cite[Theorem 2.53]{Woodin1999}) Suppose that $\P$ is a forcing poset and that for all sequences $\mathcal{D}=\seq{D_\alpha}{\alpha<\omega_1}$ of dense subsets of $\P$, there is a $\mathcal{D}$-generic filter. Then for any regular cardinal $\theta$ such that $\P \in H(\theta)$, there are stationarily many $M \in P_{\omega_2}(H(\theta))$ with $\omega_1 \subseteq M$ for which there exists an $(M,\P)$-generic filter.\end{fact}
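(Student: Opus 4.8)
The plan is to reduce the stationarity assertion to a single, manageable claim: for every $F\colon{}^{<\omega}H(\theta)\to H(\theta)$ there is an $M$ with $\omega_1\subseteq M$, $|M|\le\aleph_1$, closed under $F$, and carrying an $(M,\P)$-generic filter. This reduction is the standard characterization of the club filter on $P_{\omega_2}(H(\theta))$: every club contains a set of the form $C_F=\{M:F[{}^{<\omega}M]\subseteq M\}$, so a family meeting every $C_F$ meets every club. Throughout I would work with an expansion $(H(\theta),\in,<_\theta)$ carrying a fixed well-order, so that Skolem hulls make sense and closure under $F$ becomes automatic once $F\in M$, by elementarity. (Here regularity of $\theta$ and $\P\in H(\theta)$ are used only to guarantee that $H(\theta)$ is an adequate structure containing the relevant parameters.)

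For the construction, fix $F$ and let $M$ be the Skolem hull of $\omega_1\cup\{\P,F\}$ inside $(H(\theta),\in,<_\theta)$. Then $\omega_1\subseteq M$, $|M|\le\aleph_1<\aleph_2$, $M\in C_F$ since $F\in M$, and $\P\in M$, so by elementarity a set $D\in M$ is a dense subset of $\P$ exactly when $M$ believes it is. Since $|M|\le\aleph_1$, there are at most $\aleph_1$ dense subsets of $\P$ lying in $M$; enumerate them, padding with repetitions if necessary, as a sequence $\mathcal D=\langle D_\alpha:\alpha<\omega_1\rangle$. Now apply the hypothesis to $\mathcal D$ to obtain a $\mathcal D$-generic filter $G\subseteq\P$, that is, a filter meeting every $D_\alpha$. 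Then $G$ meets every dense subset of $\P$ that belongs to $M$, so $G$ is $(M,\P)$-generic. Thus $M$ simultaneously witnesses membership in $C_F$ and the existence of an $(M,\P)$-generic filter; letting $F$ vary over all such functions shows that the collection of such $M$ is stationary in $P_{\omega_2}(H(\theta))$.

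The argument is short because essentially all the content sits in the hypothesis: the one nonroutine step is producing $G$, and that is precisely the single invocation of the assumption. What requires care, rather than ingenuity, is twofold. First, one must pin down the intended notion of $(M,\P)$-generic filter: the $G$ produced here lives in $V$, is a genuine filter on $\P$ (not a subset of $M$), and meets the dense subsets of $\P$ that are \emph{elements} of $M$ --- this is the notion used when building guessing models. If instead one insisted on $G\subseteq\P\cap M$ meeting $A\cap M$ for every maximal antichain $A\in M$, the Skolem hull alone would not suffice for antichains of size $\ge\aleph_2$, and one would have to treat $\P\cap M$ as a forcing notion and thread $G$ through it along a continuous $\in$-chain with union $M$. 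Second, one cannot economize by fixing a single generic filter and hoping it works for stationarily many models: the dense sets that must be met grow with the model, and $\{M:M\subseteq N\}$ for a fixed $N$ is nonstationary, so the hull-and-enumerate construction genuinely has to be redone for each club function $F$.
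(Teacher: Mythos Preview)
Your argument is correct and is the standard proof of this fact. The paper does not supply its own proof---the statement is recorded as a Fact with a citation to Woodin---so there is nothing to compare against; your write-up is essentially the argument one would reconstruct from that reference.
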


Then we use a property to violate internal unboundedness. The following is a partial weakening of Cox-Krueger \cite[Lemma 5.2]{Cox-Krueger2018}.

\begin{fact}\label{cox-krueger1} Let $\P$ be a forcing poset such that $\P \in H(\theta)$ for a regular cardinal $\theta \ge \omega_2$. Let $M \in P_{\omega_2}(H(\theta))$ be such that $\omega_1 \subseteq M$ and $M \prec (H(\theta),\in,\P,\tau)$. Suppose that $\P$ forces that there is a countable set of ordinals in $\tau$ not covered by any countable set in $V$. If $G$ is an $M$-generic filter on $\P$, then there is a countable subset of $M$ that is not covered by any countable set \emph{in} $M$.\end{fact}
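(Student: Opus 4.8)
The plan is to unwind the definitions carefully and exploit $M$-genericity to push a name for a badly-covered countable set out of $M$. First I would let $\tau$ be the parameter in the expansion $(H(\theta),\in,\P,\tau)$ and fix, by elementarity, a $\P$-name $\dot{a} \in M$ that is forced to be a countable set of ordinals in $\tau$ which is not covered by any countable set in $V$; this exists by the hypothesis and by $M \prec (H(\theta),\in,\P,\tau)$, since the statement ``there is such a name'' is expressible over this structure. I would then let $G$ be the given $M$-generic filter and set $a := \dot{a}^G$. The key claim is that $a \subseteq M$ but $a$ is not covered by any countable set in $M$.

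For the first half, I would argue that $a \subseteq M$: for each $n$, the condition ``$n \in \dom(\dot{a})$ implies $\dot{a}(n) = \xi$'' is decided densely, and by $M$-genericity $G$ meets the relevant dense set, which lies in $M$ by elementarity; the decided value $\xi$ is then in $M$ because it is definable over $(H(\theta),\in,\P,\tau)$ from parameters in $M$ and the generic. (Here I am using that $\dot{a}$ is forced to have domain $\omega \subseteq M$ and range a set of ordinals, so each element of $a$ is of the form $\dot{a}(n)^G$ for $n < \omega$, and each such value is read off from $G \cap M$.) For the second half, suppose toward a contradiction that $c \in M$ is a countable set of ordinals with $a \subseteq c$. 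Then the statement ``some condition in $\P$ forces $\dot{a} \subseteq \check{c}$'' — more precisely, ``the Boolean value $\llbracket \dot{a} \subseteq \check{c} \rrbracket$ is nonzero'' — holds in $H(\theta)$, hence by elementarity there is such a condition $r \in M \cap \P$; but then, again by $M$-genericity, the dense-below-nothing-ness has to be handled: I would instead consider the dense set $D = \{ q : q \fr \dot{a} \subseteq \check c \text{ or } q \fr \dot{a} \not\subseteq \check c\}$, note $D \in M$, and since the first alternative is realized somewhere, elementarity inside $M$ gives that it is realized \emph{densely below some condition of $M$}, and $M$-genericity then forces $a \subseteq c$ to actually happen in $V$ — but $c \in M \subseteq V$ is a countable (in $V$) set covering $a$, contradicting the choice of $\dot{a}$.

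Let me restate that second half more cleanly, since it is the crux. Work in $V$: the set $c$ is a genuine countable set of ordinals (countability is absolute downward, and $c \in M$ has size $\le \aleph_0$ as witnessed in $H(\theta)$). If every $M$-generic-compatible scenario had $a \subseteq c$, we would immediately contradict that $\dot{a}$ is forced not to be covered by a ground-model countable set. So it suffices to show that our particular $a = \dot{a}^G$ satisfies $a \subseteq c$. But this is \emph{not} automatic — so instead I would choose $c \in M$ differently: I claim no countable $c \in M$ covers $a$. Suppose one did. Since ``$\dot a$ is not covered by any countable ground model set'' is forced, in particular no condition forces $\dot a \subseteq \check c$; so the set $E_c = \{q \in \P : q \fr \dot a \not\subseteq \check c\}$ is dense in $\P$, and $E_c \in M$ by elementarity (it is definable from $\dot a, c, \P$, all in $M$). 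By $M$-genericity, $G \cap E_c \ne \emptyset$, so some $q \in G$ forces $\dot a \not\subseteq \check c$, i.e. $a = \dot a^G \not\subseteq c$ — contradiction. This is exactly the argument, and it shows $a$ is a countable subset of $M$ not covered by any countable set in $M$, as desired.

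The main obstacle I anticipate is the bookkeeping around ``$M$-generic'' when $M$ is only an element of $P_{\omega_2}(H(\theta))$ rather than a model of a large fragment of $\ZFC$: one must be careful that the dense sets $E_c$ and the value-deciding dense sets genuinely lie in $M$ (this uses $M \prec (H(\theta), \in, \P, \tau)$ together with $\dot a \in M$), and that $M$-genericity means $G$ meets every dense subset of $\P$ that is an \emph{element} of $M$. The other small point to get right is the verification $a \subseteq M$, i.e. that $\dom(\dot a)$ is forced to be $\subseteq M$ (it is $\subseteq \omega \subseteq M$) and that each value $\dot a(n)^G$ lands in $M$; this again is a density-plus-elementarity argument identical in shape to the covering argument. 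Everything else is routine absoluteness of countability and the definition of ``covered by a countable set.''
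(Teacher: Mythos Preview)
The paper does not supply its own proof of this statement; it is recorded as a fact and attributed to Cox--Krueger \cite[Lemma~5.2]{Cox-Krueger2018}. So there is nothing in the paper to compare against, and your argument stands or falls on its own.

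Your cleaned-up version at the end is essentially the standard proof and is correct in outline: pick a name $\dot a \in M$ for the bad countable set (or an enumeration $\dot f:\omega \to \dot a$), use $M$-genericity on the value-deciding dense sets to see $a := \{\dot f(n)^G : n<\omega\} \subseteq M$, and then for any countable $c \in M$ use that $E_c \in M$ is dense to produce an element of $a$ outside $c$. One point deserves a little more care: in the last step you write ``some $q \in G$ forces $\dot a \not\subseteq \check c$, i.e.\ $a = \dot a^G \not\subseteq c$.'' That ``i.e.'' silently invokes the forcing theorem, which is not available for a merely $M$-generic filter. What you should do instead is unfold: from $q \Vdash \dot a \not\subseteq \check c$ one gets that $D = \{ s : \exists n\, \exists \xi \notin c,\ s \Vdash \dot f(n) = \check\xi \}$ is dense, and $D \in M$; meeting $D$ with $G$ (inside $M$) yields a specific $n$ and $\xi \notin c$ with $\xi = \dot f(n)^G \in a$. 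With that adjustment the argument is complete. Also make explicit once that $G \subseteq \P \cap M$, so that the deciding conditions and hence the decided ordinals land in $M$ by elementarity; you gesture at this but it is worth stating cleanly rather than folding it into ``definable from parameters in $M$ and the generic.''
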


Then we have another weaking of Cox-Krueger \cite[Proposition 5.4]{Cox-Krueger2018} which an analog of the lemma used by Viale and Weiss to produce guessing models from Woodin's result \cite[Lemma 4.6]{Viale-Weiss2010}.

\begin{fact}\label{cox-krueger2} Fix a regular cardinal $\theta \ge \omega_2$. Assume that the poset $\P$ has the weak $\omega_1$-approximation property and forces that $2^{\theta}$ has size $\omega_1$. Then there exists a set $w$ and a $\P$-name $\dot{\Q}$ for an $\omega_1$-cc forcing poset such that the following holds: For any regular $\chi$ with $\P,\dot{\Q},w \in H(\chi)$ and any $M \in P_{\omega_2}(H(\chi))$ such that $\omega_1 \subseteq M$ and $M \prec (H(\chi),\in,\P \ast \dot{\Q},\theta,w)$, if there exists an $M$-generic filter on $\P \ast \dot{\Q}$, then $M \cap H(\theta)$ is indestructibly $\omega_1$-weakly guessing.\end{fact}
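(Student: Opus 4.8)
\emph{Proof plan.} I would prove this as the ``weak, indestructible'' analogue of Viale and Weiss's \cite[Lemma 4.6]{Viale-Weiss2010}, following the refinement of Cox and Krueger \cite[Proposition 5.4]{Cox-Krueger2018} and tracking at each step where the hypotheses can be weakened. First I would pass to a $\P$-generic extension $V[G]$ and use the assumption that $\P$ forces $2^\theta$ to have size $\omega_1$ to fix there a surjection $e \colon \omega_1 \to H(\theta)^V$; in particular $e$ enumerates, in order type $\omega_1$, every countable partial function from $\omega_1$ into the ordinals below $\theta$ --- that is, every set that can occur as an initial segment $f \rest i$ of a candidate function $f \colon \omega_1 \to \theta$. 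Back in $V$ I would let $\dot{e}$ be a $\P$-name for such an $e$ and let $w$ be a set coding $\P$, $\dot{e}$, $\theta$, and the bookkeeping used below; this $w$ is the promised set.

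Next I would take $\dot{\Q}$ to be the $\P$-name for the ``anticipation'' forcing of Cox and Krueger attached to $\dot{e}$: roughly, the forcing that generically builds a continuous, $\s$-increasing filtration $\seq{N_\xi}{\xi < \omega_1}$ of $H(\theta)^{V[G]}$ by countable submodels, with $\dot{e} \rest \xi \s N_\xi$, conditions being countable initial segments of such a filtration ordered by end-extension. A $\Delta$-system argument, using that the collapse makes $\CH$ hold in $V[G]$ and $|H(\theta)| = \omega_1$ there, shows $\dot{\Q}$ is forced to be $\omega_1$-cc. The design goal is: if $\chi$ is regular with $\P, \dot{\Q}, w \in H(\chi)$, if $M \prec (H(\chi), \in, \P \ast \dot{\Q}, \theta, w)$ with $\omega_1 \s M$, and if $G \ast H$ is an $M$-generic filter on $\P \ast \dot{\Q}$, then --- writing $k := M \cap \omega_1 \in \omega_1$ and $M_0 := M \cap H(\theta)$ --- $M$-genericity yields that $M \cap H(\theta)^{V[G]}$ is the $k$-th piece $N_k$ of the generic filtration, so that $M_0$ is the trace $N_k \cap V$ of that piece on $H(\theta)^V$: an elementary submodel of (an expansion of) $H(\theta)^V$ that is closed under $e \rest k$.

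Then I would run the guessing argument, taking care to phrase it so that it is absolute to $\omega_1$-preserving outer models. Fix $\chi$, $M$, $G \ast H$, $k$, $M_0$ as above; let $W \supseteq V$ be any $\omega_1$-preserving extension and $f \colon \omega_1 \to \ON$ a function of $W$ with $f \rest i \in M_0$ for every $i < \omega_1$. Each $f \rest i$ lies in $H(\theta)^V \cap M$, so $\range(f) \s \theta$ and each $f \rest i$ is a value $e(\xi_i)$ of the ground-model-enumerated list. Working inside $M[G \ast H]$ --- which has the same ordinals as $M$, by $M$-genericity --- the generic filtration $H$ together with $e \rest k$ yields a definition, with parameters in $M$, of the unique function that extends exactly those $e(\xi)$ which are initial segments of arbitrarily long members of the list; elementarity of $M$ in $H(\chi)$ then places this function in $M_0$, and one checks it equals $f$. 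The point of the construction is that this recovery consults only objects of $V$ --- the enumeration $e$, the filtration $H$, and the $f \rest i$ themselves --- so, once it is verified in $V$, the very same computation is correct as performed in $W$; hence $M_0$ is weakly $\omega_1$-guessing in $W$, and, as $W$ was arbitrary, $M_0 = M \cap H(\theta)$ is indestructibly weakly guessing. (The weak $\omega_1$-approximation property of $\P$ is what forces the recovered function to be $f$ rather than a spurious, end-extension-incompatible guess; in the application where $\MM$ --- hence $\BFP$ --- holds, this is reinforced by the fact that no $\omega_1$-preserving $W$ can add a genuinely new such $f$, whose initial segments would form a new cofinal branch of the $\omega_1$-sized tree $M_0 \cap {}^{<\omega_1}\theta$.)

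The step I expect to be the main obstacle is the one in the third paragraph: showing that the generic filtration really lets $M$ \emph{define} the glued function from parameters it possesses, uniformly and absolutely --- this is the substance of Viale and Weiss's lemma and is precisely where the approximation property of $\P$ is played against the $\omega_1$-cc of $\dot{\Q}$ --- together with checking that the present weakenings (the \emph{weak} $\omega_1$-approximation property of $\P$, and the correspondingly weaker conclusion that $M_0$ is only \emph{weakly} guessing) still suffice at that point. The remaining verifications --- that $\dot{\Q}$ is $\omega_1$-cc and that the definition of the glued function requires no parameter outside $H(\theta)$ --- are routine but slightly delicate. Granting \cite[Proposition 5.4]{Cox-Krueger2018}, all of these are carried out there, so in a writeup I would cite that result and indicate only the modifications forced by the weakenings.
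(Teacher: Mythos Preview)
The paper does not prove this statement at all: it is stated as a \emph{Fact}, explicitly labeled a weakening of Cox--Krueger \cite[Proposition~5.4]{Cox-Krueger2018}, and is simply quoted without argument. Your eventual plan---to cite that proposition and indicate the minor modifications needed for the weak versions of approximation and guessing---is therefore exactly what the paper does, and nothing more is required.

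That said, one point in your sketch deserves a flag. You describe $\dot{\Q}$ as the forcing whose conditions are countable initial segments of a continuous increasing filtration of $H(\theta)^{V[G]}$ by countable elementary submodels, ordered by end-extension, and then assert that a $\Delta$-system argument shows it is $\omega_1$-cc. A poset of that shape is $\sigma$-closed, not $\omega_1$-cc, and a $\Delta$-system argument does not apply to it; moreover, being both $\sigma$-closed and $\omega_1$-cc would make $\dot{\Q}$ trivial. So either the description of $\dot{\Q}$ or the chain-condition claim is off. This does not affect the overall plan, since you correctly propose to defer to Cox--Krueger's actual construction of $\dot{\Q}$; just be aware that the heuristic picture you wrote down is not quite the right one, and when you do look up \cite[Proposition~5.4]{Cox-Krueger2018} you should expect the $\omega_1$-cc tail forcing to look different from a filtration-building poset.
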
 

\begin{proof}[Proof of \autoref{maincorollary}] Fix the $\theta$ such that we want stationarily many indestructibly weakly $\omega_1$-guessing models in $P_{\omega_2}(H(\theta))$ that are not internally unbounded. Let $\P$ denote the classical Namba forcing and let $\dot{\Q}$ be a $\P$-name for the L{\' e}vy collapse $\Col(\omega_1,(2^\theta)^+)$. Let $\chi > \theta$ be large enough for $H(\chi)$ to contain the needed objects referred to in the statement of \autoref{cox-krueger2} and then apply $\MM$ to \autoref{woodin} to find a stationary set $S' \subseteq H_{\omega_2}(H(\chi))$ of $M \supseteq \omega_1$ for which there is an $(M,\P \ast \dot{\Q})$-generic filter. Then the set $S:= \{M \cap H(\theta):M \in S'\}$ is the stationary set we are looking for. Since $\P$ adds a countable sequence in $\omega_2$ not covered by any set in $V$ and since we can restrict to find an $(M,\P)$-generic filter, it follows by \autoref{cox-krueger1} that for all $M \in S'$, $M$ is not internally unbounded as witnessed by a subsequence of $\omega_2$, so this is also the case for $M \cap H(\theta)$. Since $\MM$ implies $\BFP$, \autoref{maintheorem} implies that $\P \ast \dot{\Q}$ has the weak $\omega_1$-approximation property, so it follows by \autoref{cox-krueger2} that all $M \in S$ are indestructibly weakly guessing.\end{proof}

\section{Higher Namba Forcing and Cardinal Preservation}

Now we work towards proving \autoref{secondtheorem}. The work uses higher Laver-Namba forcings employed by Cox and Krueger to obtain a stationary set of guessing models. We retain the crux of their argument: A deft use of the pigeonhole principle, where $\dot{f}$ is a name for a function, $\seq{D_\xi}{\xi \in \dom \dot{f}}$ is a sequence of open dense sets deciding values $\dot{f}$ and a stem length $n<\omega$ is isolated so that unboundedly many values of the function can be decided with a single condition. In Cox and Krueger's case they argue that a certain Laver-Namba forcing has a weak approximation property, but in our case we will be showing that certain cardinals are preserved.

If $\lambda$ is a cardinal and $X \subseteq \mathcal{P}(\lambda)$, let $\P_\textup{\textsf{LNF}}(X)$ be the Laver version of Namba forcing with splitting into members of $X$. This means that $p \in \P_\textup{\textsf{LNF}}$ if and only if $p \subseteq {}^\omega \lambda$ and there is some $t \in p$ such that for all $s \sqsupseteq t$, $\osucc_p(s) \in X$. The main feature of the Laver version to consider is that we can use direct extensions. Given $p, q \in \P_\textup{\textsf{LNF}}(X)$, we write $p \le^* q$ if $p \le q$ and $\stem(p) = \stem(q)$. We will also use a principle found by Laver, for which we will suggest some useful notation:

\begin{definition}\cite[Chapter X,Definition 4.10]{PIF} Given a successor cardinal $\kappa^+$, we write $\LIP(\kappa^+)$ if there is a $\kappa^+$-complete ideal $I \subset P(\kappa^+)$ such that there is a set $\mathcal{D} \subseteq I^+$ that is $\kappa$-closed subset and dense in itself, i.e$.$ for all $A \in I^+$, there is some $B \subseteq A$ with $B \in I^+$ such that $B \in \mathcal{D}$.\end{definition}

\begin{fact}[Laver]\label{gettingLIP} If $\kappa < \mu$ where $\kappa$ is regular and $\mu$ is measurable, then $\Col(\kappa,<\mu)$ forces $\LIP(\mu)$.\end{fact}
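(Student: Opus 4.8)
The plan is to run the standard measure-lifting argument. Fix a normal ultrafilter $U$ on $\mu$ and let $j\colon V\to M=\Ult(V,U)$ be the ultrapower embedding, so $\crit(j)=\mu$, ${}^{\mu}M\subseteq M$, and $V_\mu\subseteq M$; in particular $\P:=\Col(\kappa,<\mu)$, being a $\mu$-sized subset of $V_\mu$, lies in $M$ and is computed there correctly. Since every condition of $\P$ has support bounded below the regular cardinal $\mu$, inside $M$ we get the product factorization $j(\P)=\Col(\kappa,<j(\mu))\cong\P\times\Q$ with $\Q:=\Col(\kappa,[\mu,j(\mu)))^{M}\in M$, and $\Q$ is $\kappa$-closed (closed under descending sequences of length ${<}\kappa$) in $M$; as $\P$ is itself $\kappa$-closed it adds no new ${<}\kappa$-sequences, so $\Q$ remains $\kappa$-closed in $M[G]$, for $G$ any $\P$-generic filter. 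Fix such a $G$ and work in $V[G]$, where $\mu=\kappa^+$.

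Next I would build the generic ultrapower as a $\Q$-name. Because $\P\subseteq V_\mu$ and $\crit(j)=\mu$ we have $j\rest\P=\id$, hence $j[G]=G$, and under the factorization $j(\P)\cong\P\times\Q$ the set $j[G]$ sits inside $G\times H$ for \emph{every} $\Q$-generic filter $H$ over $V[G]$ (a fortiori over $M[G]$). The usual lifting lemma then gives, in $V[G][H]$, an elementary $\hat\jmath\colon V[G]\to M[G][H]$ with $\hat\jmath\rest V=j$ and $\crit(\hat\jmath)=\mu$; since we only ever apply it to subsets of $\mu$, its restriction to $\PP(\mu)^{V[G]}$ is a set, uniformly definable from $(j,G,H)$, so there is a genuine $\Q$-name $\dot{\hat\jmath}$ for it in $V[G]$. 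Define there
\[
I:=\{\,X\s\mu:\ \Vdash_{\Q}\ \check\mu\notin\dot{\hat\jmath}(\check X)\,\},\qquad b_X:=\|\,\check\mu\in\dot{\hat\jmath}(\check X)\,\|_{\ro(\Q)} .
\]
Elementarity of $\dot{\hat\jmath}$ makes $X\mapsto b_X$ a Boolean homomorphism $\PP(\mu)\to\ro(\Q)$ with kernel $I$, and $\crit(\hat\jmath)=\mu=\kappa^+$ forces it to commute with unions of length ${\le}\kappa$; hence $I$ is a $\kappa^+$-complete proper ideal on $\mu$ containing the singletons, and $b$ factors through an embedding $e\colon\PP(\mu)/I\hookrightarrow\ro(\Q)$.

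The crux is to see that $e$ is dense, after which the $\kappa$-closure of $\Q$ delivers $\LIP(\mu)$. For $q\in\Q$, pick $f_q\in V$ with $[f_q]_U=q$; by {\L}o{\'s} we may assume $f_q(\alpha)\in\Col(\kappa,[\alpha,\mu))^{V}$ for $U$-almost all $\alpha$, and set $A_q:=\{\,\alpha<\mu:f_q(\alpha)\in G\,\}$, a subset of $\mu$ in $V[G]$ that is well defined modulo $I$ (a $U$-null set $W$ satisfies $\mu\notin j(W)$, so $W\in I$). The key computation is that $\hat\jmath(f_q)(\mu)=j(f_q)(\mu)=[f_q]_U=q$ while $\hat\jmath(G)=G\times H$, so ``$\mu\in\hat\jmath(A_q)$'' is equivalent to ``$q\in H$'', i.e.\ $b_{A_q}=q$. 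Thus the image of $e$ contains the dense set $\{q:q\in\Q\}$, so $\{[A_q]_I:q\in\Q\}$ is dense in $\PP(\mu)/I$; and it is $\kappa$-closed because $\Q$ is, since a lower bound $q$ of a ${<}\kappa$-chain $\langle q_i\rangle$ in $\Q$ has $f_q(\alpha)\le f_{q_i}(\alpha)$ for $U$-a.e.\ $\alpha$ and hence $[A_q]_I\le[A_{q_i}]_I$ for every $i$. So $\PP(\mu)/I$ has a dense $\kappa$-closed subset, which together with the $\kappa^+$-completeness of $I$ is exactly $\LIP(\mu)$.

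The step I expect to be the main obstacle is the identity $b_{A_q}=q$: this is precisely where the genericity of $G$ over $V$ and the normality of $U$ must be played against each other — the ``pigeonhole''/reflection phenomenon underlying the surrounding discussion — and along the way one must be careful about the ``mod $I$'' conventions implicit in the definition of $\LIP$ (both the ``$\kappa$-closed'' and the ``dense in itself'' clauses are really statements about the quotient $\PP(\mu)/I$, so one should check the precise formulation of \cite[Chapter X,Definition 4.1]{PIF} is met; passing between the quotient and an on-the-nose family $\mathcal D\s I^+$ is routine using that removing an $I$-small set from an $I$-positive set leaves it $I$-positive). Everything else is bookkeeping in the lifting machinery.
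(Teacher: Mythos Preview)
The paper does not actually give a proof of this fact; it merely attributes it to Laver (unpublished) and remarks that the argument is similar to Galvin--Jech--Magidor's construction of a precipitous ideal on $\aleph_2$, with further details in Shelah \cite[Chapter X]{PIF}. Your write-up is precisely that Galvin--Jech--Magidor style lifting argument, so you have supplied what the paper only gestures at.

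A couple of minor tightening remarks. First, you say $\Q$ remains $\kappa$-closed in $M[G]$; what you actually need (and what the last paragraph uses) is $\kappa$-closure in $V[G]$. This follows because $\P$ is $\kappa$-closed, so any $<\kappa$-sequence from $\Q$ in $V[G]$ lies in $V$, and then ${}^{\mu}M\subseteq M$ puts it in $M$, where $\Q$ is $\kappa$-closed. Second, the identity $b_{A_q}=q$ that you flag as the potential obstacle is in fact entirely straightforward once the lifting is set up: $\mu\in\hat\jmath(A_q)$ unwinds to $j(f_q)(\mu)=q\in\hat\jmath(G)=G\times H$, i.e.\ $q\in H$, so the Boolean value is exactly $q$. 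No further interplay between genericity of $G$ and normality of $U$ is needed beyond what you already wrote. Finally, your parenthetical about passing from the quotient $\PP(\mu)/I$ to an honest $\mathcal D\subseteq I^+$ is correct and routine: closing $\{A_q:q\in\Q\}$ under $I$-equivalence and using $\kappa^+$-completeness of $I$ to handle the ${<}\kappa$-intersections gives a $\kappa$-closed dense family in the literal sense of the definition.
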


Laver's proof of \autoref{gettingLIP} is unpublished, but the argument is similar to the one found by Galvin, Jech, and Magidor for obtaining a certain precipitous ideal on $\aleph_2$ \cite{Galvin-Jech-Magidor1978}. Some additional details appear in Shelah \cite[Chapter X]{PIF}. We will use a version of this result for successive cardinals that also comes from unpublished work of Laver:

\begin{theorem}[Laver]\label{gettingmoreLIP} Assume the consistency of class-many supercompact cardinals. Then there is a forcing extension in which $\LIP(\lambda^{++})$ holds for all infinite cardinals $\lambda$. (See \cite[Page 89]{Hodges-Shelah1981} and see \cite{Shelah1996} for related arguments.)\end{theorem}

%

The following facts are covered in detail by Cox and Krueger, though we state them in some specificity. They are fairly standard arguments for Namba forcings, and the first two facts to some extent go back to Laver's proof of the consistency of the Borel Conjecture (see also Cummings-Magidor \cite{Cummings-Magidor2011} and Shelah \cite{PIF}).

\begin{fact}\label{direct-decision} \cite[Lemma 6.5]{Cox-Krueger2018},\cite[Section 2.1, Fact 1]{Cummings-Magidor2011} Suppose $I \subseteq \kappa^+$ is a $\kappa^+$-complete ideal and suppose $p \in \P_\textup{\textsf{LNF}}(I^+)$ forces that $\dot{\gamma}$ is a name for an ordinal below $\kappa$. Then there is some $q \le^* p$ and $\delta \in \ON$ such that $q \Vdash \textup{``}\dot{\gamma} = \delta \textup{''}$.\end{fact}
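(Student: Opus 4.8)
The plan is to run the usual Prikry-style pure-decision argument for $\P_\textup{\textsf{LNF}}(I)$, using the $\kappa^+$-completeness of $I$ in place of the pigeonhole over a successor set that one would use in a smaller forcing, together with a Mixing-Principle step exactly of the form of \autoref{name-claim}. Call a node $s$ of $p$ with $s \sqsupseteq \stem(p)$ \emph{decisive} if there are a direct extension $r \le^* p \rest s$ and an ordinal $\delta$ with $r \Vdash \textup{``}\dot\gamma = \delta\textup{''}$, and \emph{indecisive} otherwise. The goal is precisely to show that $\stem(p)$ is decisive, since then $q := r$ is as required.

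First I would prove: if $s$ is indecisive, then $B_s := \{\alpha \in \osucc_p(s) : s {}^\frown \langle \alpha \rangle \text{ is decisive}\}$ lies in $I$. Suppose not. For each $\alpha \in B_s$ fix $r_\alpha \le^* p \rest s {}^\frown \langle \alpha \rangle$ and $\delta_\alpha < \kappa$ with $r_\alpha \Vdash \textup{``}\dot\gamma = \delta_\alpha\textup{''}$. Since $B_s$ is the union of the $\kappa < \kappa^+$ sets $\{\alpha \in B_s : \delta_\alpha = \xi\}$, $\xi < \kappa$, and $B_s \in I^+$, the $\kappa^+$-completeness of $I$ yields a single $\delta^*$ with $B_s^* := \{\alpha \in B_s : \delta_\alpha = \delta^*\} \in I^+$. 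Let $q$ be the tree with stem $s$, with $\osucc_q(s) = B_s^*$, and agreeing with $r_\alpha$ on all nodes extending $s {}^\frown \langle \alpha \rangle$, for each $\alpha \in B_s^*$. Then $q \in \P_\textup{\textsf{LNF}}(I)$, witnessed by the node $s$ (its successor set is $B_s^* \in I^+$, and above each $s {}^\frown \langle \alpha \rangle$ it coincides with the condition $r_\alpha$), and $q \le^* p \rest s$. The conditions $\{r_\alpha : \alpha \in B_s^*\}$ form a maximal antichain below $q$: they have pairwise incomparable stems, and any $r \le q$ is compatible with $r_\alpha$ for any $\alpha$ with $s {}^\frown \langle \alpha \rangle \in r$. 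Since each $r_\alpha$ forces $\dot\gamma = \delta^*$, so does $q$, exactly as in \autoref{name-claim}. Thus $s$ is decisive, a contradiction. Hence $B_s \in I$, and since $\osucc_p(s) \in I^+$ the set $\{\alpha \in \osucc_p(s) : s {}^\frown \langle \alpha \rangle \text{ is indecisive}\}$ is $I$-positive.

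Finally, suppose toward a contradiction that $\stem(p)$ is indecisive. Build a subtree $q \le^* p$ with $\stem(q) = \stem(p)$ recursively: having placed an indecisive node $s \sqsupseteq \stem(p)$ into $q$, take as its successors in $q$ exactly those $\alpha$ for which $s {}^\frown \langle \alpha \rangle$ is indecisive; by the last sentence of the previous paragraph this set is $I$-positive, so $q \in \P_\textup{\textsf{LNF}}(I)$ (witnessed by $\stem(p)$), and by construction every node of $q$ extending $\stem(q)$, and $\stem(q)$ itself, is indecisive. But $q \le p$ forces $\dot\gamma \in \kappa$, so there are $r \le q$ and $\delta$ with $r \Vdash \textup{``}\dot\gamma = \delta\textup{''}$. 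Put $s := \stem(r)$. Then $s \in q$ and $s \sqsupseteq \stem(q)$, and since $s$ splits in $r \le p \rest s$ we get $\stem(p \rest s) = s = \stem(r)$, so $r \le^* p \rest s$ witnesses that $s$ is decisive --- contradicting that every node of $q$ at or above its stem is indecisive. Therefore $\stem(p)$ is decisive, which is the claim. The one delicate point is the middle step, namely that the glued tree $q$ is a genuine condition of $\P_\textup{\textsf{LNF}}(I)$ that genuinely decides $\dot\gamma$ --- a Mixing-Principle argument identical in form to \autoref{name-claim}; the remaining bookkeeping (that $\stem(p \rest s) = s$, that the various pruned trees are conditions) and the $\kappa^+$-completeness pigeonhole are routine.
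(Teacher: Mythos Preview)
The paper does not give its own proof of this statement: it is recorded as a Fact with citations to Cox--Krueger and Cummings--Magidor, under the remark that ``the following facts are covered in detail by Cox and Krueger.'' So there is nothing in the paper to compare against beyond the citations.

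Your argument is the standard Prikry-style pure-decision proof and is correct. The two substantive steps are handled properly: in the propagation step you use $\kappa^+$-completeness of $I$ together with the bound $\dot\gamma < \kappa$ to refine $B_s$ to a positive set on which the decided values agree, and the amalgamated tree $q$ is indeed a condition with stem $s$ directly extending $p \rest s$ and deciding $\dot\gamma$; in the final step, the subtree of indecisive nodes is a genuine condition because at every indecisive node the decisive successors form an $I$-set, and your identification $\stem(p \rest s) = s = \stem(r)$ is valid since in the Laver version every node at or above the stem splits. One cosmetic point: the appeal to \autoref{name-claim} is only by analogy, since that claim is stated for $\P_{\textup{\textsf{CNF}}}$ rather than $\P_{\textup{\textsf{LNF}}}(I)$, but the mixing argument you actually carry out is self-contained and does not depend on that reference.
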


\begin{fact}\label{prikry-density} \cite[Lemma 6.4]{Cox-Krueger2018},\cite[Section 2.1, Fact 2]{Cummings-Magidor2011} Suppose $I \subseteq \kappa^+$ is a $\kappa^+$-complete ideal. Let $D \subseteq \P_\textup{\textsf{LNF}}(I^+)$ be dense open. Then for each $p \in \P_\textup{\textsf{LNF}}(I^+)$, there is some $q \le^* p$ and some $n<\omega$ such that for any $t \in q$ with $|t|=\stem(p)+n$, $q \rest t \in D$.\end{fact}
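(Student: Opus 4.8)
The plan is to run the standard rank analysis for tree-Prikry-style forcings; the point of the $\kappa^+$-completeness of $I$ is precisely to force the relevant rank below $\omega$. I will use freely that a condition $p \in \P_\textup{\textsf{LNF}}(I)$ has a stem $\stem(p)$ that splits and above which every node splits into an $I$-positive set, that $\le^*$ is transitive, and that $\stem(p \rest s) = s$ whenever $s \in p$ and $s \sqsupseteq \stem(p)$. First I would fix a dense open $D \subseteq \P_\textup{\textsf{LNF}}(I)$ and a condition $p$, and define a rank $\rho(s) \in \ON \cup \{\infty\}$ on the nodes $s \sqsupseteq \stem(p)$ of $p$ by recursion on ordinals: $\rho(s) \le 0$ iff there is $r \le^* (p \rest s)$ with $r \in D$; and for $\alpha > 0$, $\rho(s) \le \alpha$ iff $\rho(s) \le 0$ or $\{\nu \in \osucc_p(s) : \rho(s \conc \langle \nu \rangle) < \alpha\} \in I^+$; and $\rho(s)$ is the least such ordinal, or $\infty$ if there is none. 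This is well defined because it is a recursion on $\alpha$, not on the (infinitely long) nodes.

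The heart of the argument is that $\rho(\stem(p)) < \omega$. I would first observe that if $\rho(s) \ge \omega$ then $\{\nu \in \osucc_p(s) : \rho(s \conc \langle \nu \rangle) \ge \omega\} \in I^+$: indeed $\rho(s) \ge \omega$ makes $\{\nu : \rho(s \conc \langle \nu \rangle) < m\} \in I$ for every $m < \omega$, so by the $\kappa^+$-completeness (in particular $\sigma$-completeness) of $I$ the union $\{\nu : \rho(s \conc \langle \nu \rangle) < \omega\}$ lies in $I$, whereas $\osucc_p(s) \in I^+$. Hence if $\rho(\stem(p)) \ge \omega$ I could prune $p$ above its stem, keeping at each retained node $s$ only the children $\nu$ with $\rho(s \conc \langle \nu \rangle) \ge \omega$; by the observation this yields a condition $q^* \le^* p$ all of whose nodes above the stem have $\rho$-rank $\ge \omega$. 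But such a $q^*$ has no refinement in $D$: given $r \le q^*$, the node $s := \stem(r)$ lies in $q^*$, and $r \le^* (q^* \rest s) \le^* (p \rest s)$ with $r \in D$, so $\rho(s) \le 0$ — contradicting $\rho(s) \ge \omega$. This contradicts density of $D$, so $n := \rho(\stem(p))$ is finite.

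Finally I would build the desired $q$ by a finite pruning of $p$: above $\stem(p)$, at each retained node $s$ with $\rho(s) = k > 0$ keep exactly the children $\nu$ with $\rho(s \conc \langle \nu \rangle) < k$ — an $I^+$-set by minimality of the rank — and at each node $s$ with $\rho(s) = 0$ stop pruning and graft on a fixed witness $q_s \le^* (p \rest s)$ with $q_s \in D$ (which has stem $s$, and whose subtree is kept whole). Since the rank drops by at least $1$ at each successive retained splitting level, every branch reaches a rank-$0$ node within $n$ levels of $\stem(p)$. Therefore every $t \in q$ with $|t| = |\stem(p)| + n$ either has $\rho(t) = 0$, so $q \rest t = q_t \in D$, or has an initial segment $s$ with $\stem(p) \sqsubseteq s \sqsubsetneq t$ and $\rho(s) = 0$, so $q \rest t \le q_s \in D$ and hence $q \rest t \in D$ because $D$ is open; this $q$ and this $n$ are as required.

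I expect the only real friction to be bookkeeping: confirming that $q^*$ is a genuine condition below $p$ with no $D$-refinement (which rests precisely on transitivity of $\le^*$ and on $\stem(p \rest s) = s$), and that the finite grafting in the last paragraph again produces a condition in $\P_\textup{\textsf{LNF}}(I)$ with stem $\stem(p)$. Everything else, including the single use of $\kappa^+$-completeness to collapse the countably many finite-rank layers into an $I$-small set, is routine, and the same scheme is what underlies \autoref{direct-decision} as well (the case where the decided object is a single ordinal below $\kappa$).
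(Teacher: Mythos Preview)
The paper does not supply its own proof of this fact; it is stated with citations to Cox--Krueger and Cummings--Magidor, where the argument is given in detail. Your rank analysis is correct and is essentially the standard proof found in those references: define an ordinal rank on nodes above the stem, use $\sigma$-completeness of $I$ (a consequence of $\kappa^+$-completeness) to show that the set of nodes of rank $\ge \omega$ would yield a direct extension with no refinement in $D$, and then prune along strictly decreasing ranks and graft witnesses at rank-$0$ nodes. The bookkeeping points you flag (that $q^*$ is a genuine condition with no $D$-refinement, and that the grafted tree is a condition with stem $\stem(p)$) go through exactly as you indicate.
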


Finally, we have closure of the direct extension, the argument for which is much easier than the one for the last two facts. Lower bounds can be obtained by inductively defining splitting sets via the lower bounds for $\LIP(\kappa^+)$.

\begin{fact}\label{prikry-closure}\cite[Lemma 6.13]{Cox-Krueger2018} Suppose that $\LIP(\kappa^+)$ holds and is witnessed by $I$ and that $\eta<\kappa$. If $\seq{p_\xi}{\xi<\eta}$ is a $\le^*$-decreasing sequence of conditions in $\P_\textup{\textsf{LNF}}(I^+)$, then there is $\bar p$ such that $\bar p \le^* p_\xi$ for all $\xi<\eta$.\end{fact}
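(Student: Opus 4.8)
The plan is to build $\bar p$ directly, by recursion on its nodes, using the $\kappa$-closure of $\mathcal{D}$ to choose the splitting sets one level at a time. Since $\seq{p_\xi}{\xi<\eta}$ is $\le^*$-decreasing, all the $p_\xi$ share a common stem $t^* := \stem(p_0)$, and for $\xi \le \zeta < \eta$ we have $p_\zeta \subseteq p_\xi$, hence $\osucc_{p_\zeta}(s) \subseteq \osucc_{p_\xi}(s)$ for every $s \sqsupseteq t^*$ lying in $p_\zeta$. As in the settings where this fact is applied, I would work with conditions whose splitting sets all lie in $\mathcal{D}$, i.e.\ $\osucc_{p_\xi}(s) \in \mathcal{D}$ for every $s \sqsupseteq \stem(p_\xi)$ in $p_\xi$; every condition in $\P_\textup{\textsf{LNF}}(I)$ has a $\le^*$-refinement of this form, obtained by thinning node by node using the density of $\mathcal{D}$ in $I^+$.

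I would then construct $\bar p$ as follows. Put $t^*$ and all of its initial segments into $\bar p$, each with a unique successor, so that $\stem(\bar p) = t^*$. Assuming $s \sqsupseteq t^*$ has been placed in $\bar p$ and, inductively, $s \in p_\xi$ for all $\xi < \eta$, the sequence $\seq{\osucc_{p_\xi}(s)}{\xi<\eta}$ is a $\subseteq$-decreasing chain of members of $\mathcal{D}$ of length $\eta < \kappa$; by the $\kappa$-closure of $\mathcal{D}$ witnessing $\LIP(\kappa^+)$ there is some $B_s \in \mathcal{D}$ with $B_s \subseteq \osucc_{p_\xi}(s)$ for all $\xi < \eta$. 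Declare $\osucc_{\bar p}(s) = B_s$, i.e.\ put $s {}^\frown \langle\alpha\rangle$ into $\bar p$ exactly for $\alpha \in B_s$; then $s {}^\frown\langle\alpha\rangle \in p_\xi$ for every $\xi$, which preserves the induction hypothesis to the next level.

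To finish, one checks that $\bar p$ is as required: it is a downward-closed subtree of ${}^{<\omega}\lambda$ with $\stem(\bar p) = t^*$, and every node $s \sqsupseteq t^*$ of $\bar p$ has $\osucc_{\bar p}(s) = B_s \in \mathcal{D} \subseteq I^+$, so $\bar p \in \P_\textup{\textsf{LNF}}(I)$; and by the induction hypothesis every node of $\bar p$ lies in each $p_\xi$, so $\bar p \subseteq p_\xi$ with the common stem $t^*$, giving $\bar p \le^* p_\xi$ for every $\xi < \eta$.

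The substantive point — really the only one — is that $\kappa$-closure is a feature of the distinguished family $\mathcal{D}$ and not of $I^+$ itself (indeed a $\subseteq$-decreasing $\omega$-chain of arbitrary $I$-positive sets can have empty intersection), so the recursion above only goes through once the splitting sets in play are known to belong to $\mathcal{D}$. Granting that, the proof is a routine recursion on $\omega$, with the hypothesis $\eta < \kappa$ used exactly where the $\kappa$-closure of $\mathcal{D}$ is invoked at each node.
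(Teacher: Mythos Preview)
Your proof is correct and follows exactly the approach the paper sketches in the sentence preceding the Fact: build $\bar p$ by inductively choosing splitting sets via the $\kappa$-closure of $\mathcal{D}$ guaranteed by $\LIP(\kappa^+)$. You are also right to isolate the one substantive point---that the recursion needs the splitting sets to lie in $\mathcal{D}$ rather than merely in $I^+$---and the paper (following Cox--Krueger) is implicitly working in that dense $\le^*$-suborder, which is all that is needed in the applications.
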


\begin{proof}[Proof of \autoref{secondtheorem}] Using \autoref{gettingmoreLIP}, assume that $\LIP(\lambda^{++})$ holds for all infinite $\lambda$. For each infinite cardinal $\mu$ such that $\LIP(\mu)$ holds, let $I_\mu$ be the witnessing ideal and let $\mathcal{D}_\mu$ be the corresponding dense set given by $\LIP(\mu)$. We will argue that for all $\mu = \lambda^{++}$, $\P_\mu:= \P_\textup{\textsf{LNF}}(\mathcal{D}_\mu)$ preserves the cofinalities of all regular $\nu \le \lambda^+$. To do this, we fix some $\lambda^{++}$ and choose some regular $\nu \le \lambda^+$ for which we will prove the statement.

First we consider the possibility that $\nu>\aleph_0$ is forced by some $p \in \P_\mu$ to have countable cofinality. Then if $\dot{f}:\omega \to \nu$ is a name for an unbounded function we can use \autoref{direct-decision} to choose a $\le^*$-descending sequence $\seq{p_n}{n<\omega}$ below $p$ such that $p_n$ forces ``$\dot{f}(n)=\beta_n$'' for some $\beta_n$. Then use \autoref{prikry-closure} to obtain a $\le^*$-lower bound $q$ for this sequence. Then $q \Vdash \text{``} \sup f[\omega]= \sup_{n<\omega}\beta_n < \nu \text{''}$, which is a contradiction.



Now suppose that $\nu$ is forced to have cofinality $\tau$ where $\aleph_0 < \tau < \nu$. Suppose that $p \in \P_\mu$ forces that $\dot{f}$ is a $\P_\mu$-name for a function $\tau^V \to \nu$. If $\dot{f}$ were a surjection, then there would be a strictly increasing and unbounded function, so assume without loss of generality that $\dot{f}$ is forced by $p$ to be strictly increasing and unbounded.

For all $\xi<\tau$, let $D_\xi \subseteq \P_\mu$ be the open dense set of conditions deciding $\dot{f}(\xi)$. Now we define a $\le^*$-decreasing sequence $\seq{p_\xi}{\xi<\tau}$ as follows: Let $p_0 = p$. If $p_\xi$ is defined, use \autoref{prikry-density} to obtain some $p_{\xi+1} \le^* p_\xi$ such that for some $n_\xi<\omega$, for any $t \in q$ with $|t|=\stem(p_\xi)+n_\xi$, we have $q \rest t \in D_\xi$. If $\xi$ is a limit and we have defined $\seq{p_{\xi'}}{\xi'<\xi}$, then let $p_\xi$ be a $\le^*$-lower bound obtained using \autoref{prikry-closure}. Once we have defined the sequence, apply \autoref{prikry-closure} again to obtain $\bar{p}$, a $\le^*$-lower bound of $\seq{p_\xi}{\xi<\tau}$ (if $\nu^+ = \mu$ then this is the best possible $\le^*$-closure).

Now apply the Pigeonhole Principle to find an unbounded set $X \subseteq \tau$ and some $\ell<\omega$ such that for all $\xi \in X$, $n_\xi = \ell$. Take any $t \in \bar{p}$ such that $|t| = |\stem(\bar{p})|+ \ell$ and let $q = \bar{p} \rest t$. Then for all $\xi \in X$, $q \in D_\xi$ and therefore there is some $\beta_\xi$ such that $q \Vdash \textup{``}\dot{f}(\xi) = \beta_\xi \textup{''}$. Let $\beta:=\sup_{\xi \in X}\beta_\xi + 1 < \nu$. Then since $q$ forces that $\dot{f}$ is strictly increasing, it follows that $q$ forces that the range of $\dot{f}$ is bounded by $\beta$. This shows that $\nu$ is preserved by $\P_\mu$ and thus completes the proof.\end{proof}

\begin{remark}\label{precipitousness} $\LIP(\aleph_2)$ implies $\CH$: Suppose $\P=\P_\textup{\textsf{LNF}}(\mathcal{D})$ where $\mathcal{D}$ is the $\LIP$-dense set and $\dot{r}$ is a $\P$-name for a subset of $\omega$. Use \autoref{direct-decision} to build a $\le^*$-descending sequence $\seq{p_n}{n<\omega}$ of conditions deciding $\dot{r}(n)$. Then their lower bound decides $\dot{r}$. Hence $\P$ cannot add reals if $\LIP(\aleph_2)$ holds, but it does add reals under $\neg \CH$.

Hence the nice behavior obtained for $\P_\textsf{\textup{LNF}}(\mathcal{D}_\mu)$ for higher cardinals does not seem adaptable to the situation in \autoref{maintheorem}.\end{remark}

We close with some questions:

\begin{enumerate}

\item What is the exact consistency strength of the statement that $\P_\textsf{\textup{CNF}}$ has the weak $\omega_1$-approximation property? \autoref{maintheorem} shows that an inaccessible cardinal is sufficient.

\item Suppose that $I$ is just the bounded ideal on $\aleph_2$. Is it consistent that the Laver-Namba forcing $\P_\textsf{\textup{LNF}}(I^+)$ has the weak $\omega_1$-approximation property? What about for other ideals $I$ on $\aleph_2$? (See \autoref{precipitousness}.)


\end{enumerate}

\subsection*{Acknowledgements} Thank you to Jouko V{\" a}{\" a}n{\" a}nen for explaining why arguments implicit in his work show that $\LIP(\aleph_2)$ implies $\CH$ \cite{Vaananen1991}. And thank you to Saharon Shelah for pointing me to the work connected to \autoref{gettingmoreLIP}.





\bibliographystyle{plain}
\bibliography{bibliography}

\end{document}